\subjclass{ 37G25; 37B10; 37B40; 37C20}
\keywords{Entropy conjecture, principal symbolic extensions,
upper semi-continuity of the entropy, homoclinic tangencies}
\theoremstyle{plain}
\newtheorem{main}{Theorem}
\newtheorem{maincor}[main]{Corollary}
\newtheorem{Thm}{Theorem}[section]
\newtheorem{Lem}[Thm]{Lemma}
\newtheorem{Prop}[Thm]{Proposition}
\newtheorem{Cor}[Thm]{Corollary}
\theoremstyle{remark}
\newtheorem{Rem}[Thm] {Remark}
\long\def\begcom#1\endcom{}
\newcommand{\quand}{\quad\text{and}\quad}
\newcommand{\inter}{\operatorname{int}}
\newcommand{\diam}{\operatorname{diam}}
\newcommand{\per}{\operatorname{per}}
\newcommand{\Per}{\operatorname{Per}}
\newcommand{\Diff}{\operatorname{Diff}}
\newcommand{\Homeo}{\operatorname{Homeo}}
\newcommand{\HT}{\operatorname{HT}}
\newcommand{\HTclos}{\overline{\HT}}
\def\Orb{\operatorname{Orb}}
\def\Diff{\operatorname{Diff}}
\def\Per{\operatorname{Per}}
\def\supp{\operatorname{supp}}
\def\spec{\operatorname{sp}}
\def\cU{{\mathcal U}}
\def\cR{{\mathcal R}}
\def\cF{{\mathcal F}}
\def\cR{{\mathcal R}}
\def\cW{{\mathcal W}}
\def\vep{\varepsilon}
\def\RR{{\mathbb R}}
\def\ZZ{{\mathbb Z}}
\begin{document}

\title[Entropy conjecture away from tangencies]
      {The entropy conjecture for \\ diffeomorphisms away from tangencies}

\author{Gang Liao$^1$, Marcelo Viana$^2$, Jiagang Yang$^3$}

\thanks{$^1$
School of Mathematical Sciences, Peking University,
Beijing 100871, China}
\email{liaogang@math.pku.edu.cn}

\thanks{$^2$
IMPA, Estrada D. Castorina 110, Jardim Bot\^anico,
22460-320 Rio de Janeiro, Brazil. }

\email{viana@impa.br}

\thanks{$^3$
Departamento de Geometria, Instituto de Matem\'atica, Universidade Federal Fluminense,
Niter\'oi, Brazil}

\email{yangjg@impa.br}

\date{September, 2010}

\thanks{GL is supported by CSC-China. MV and JY are partially supported by CNPq,
FAPERJ, and PRONEX Dynamical Systems.}

\begin{abstract}
We prove that every $C^1$ diffeomorphism away from homoclinic tangencies
is entropy expansive, with locally uniform expansivity constant.
Consequently, such diffeomorphisms satisfy Shub's entropy conjecture:
the entropy is bounded from below by the spectral radius in homology.
Moreover, they admit principal symbolic extensions, and the topological
entropy and metrical entropy vary continuously with the map.
In contrast, generic diffeomorphisms with persistent tangencies are not entropy
expansive and have no symbolic extensions.
\end{abstract}

\maketitle

%\tableofcontents

\section{Introduction}\label{s.introduction}

In this paper we prove that the dynamics of any diffeomorphism away from
homoclinic tangencies admits a very precise description at the topological level.
Let us begin by introducing the set-up of our results.

For each $r\ge 1$, let $\Diff^r(M)$ denote the space of $C^r$ diffeomorphisms on some compact
Riemannian manifold $M$, endowed with the $C^r$ topology.
A periodic point $p$ of $f\in\Diff^r(M)$ is \emph{hyperbolic} if the derivative
$Df^\kappa(p)$, $\kappa=\per(p)$ has no eigenvalues with norm $1$.
Then there exist $C^r$ curves $W^s(p)$  and  $W^u(q)$ - the \emph{stable} and \emph{unstable}
manifolds of $p$ - that intersect transversely at $p$ and satisfy
$$
f^{n\kappa}(q) \to p \ \text{for all } q\in W^s(p)
\quad\text{and}\quad
f^{-n\kappa}(q) \to p \ \text{for all } q\in W^u(p).
$$
A point $q\in W^s(p)\cap W^u(p)$ distinct from $p$ is a \emph{homoclinic point} associated to $p$.
The homoclinic point $q$ is \emph{transverse} if
$$
T_q M = T_q W^u(p) + T_q W^s(p).
$$
We say that $f$ has a \emph{homoclinic tangency} if there exists a non-transverse homoclinic
point associated to some hyperbolic periodic point.
The set of $C^r$ diffeomorphisms that have some homoclinic tangency will be denoted $\HT^r$.

For notational simplicity, we also write $\Diff(M)=\Diff^1(M)$ and $\HT=\HT^1$.
Our main results, that we are going to state in a while,
hold for diffeomorphisms in $\Diff(M)\setminus\HTclos$, that we call diffeomorphisms
\emph{away from tangencies}.

\subsection{Entropy conjecture}

Let $m=\dim M$ and $f_{*,k}: H_k(M,\RR)\to H_k(M,\RR)$, $0 \le k \le m$ be the action induced by
$f$ on the real homology groups of $M$. Let
$$
\spec(f_{*}) =\max_{0\leq k\leq m} \spec( f_{*,k}),
$$
where $\spec(f_{*,k})$ denotes the spectral radius of $f_{*,k}$. Shub~\cite{Shu74} has conjectured
(see also Shub, Sullivan~\cite{SS75}) that the logarithm of $\spec(f_*)$ is a lower bound for the
topological entropy of $f$:
\begin{equation}\label{eq.entttropyconjecture}
\log \spec(f_{*})\,\leq\, h(f)\quad\text{for every $f\in\Diff(M)$.}
\end{equation}
We prove that the conjecture does hold for diffeomorphisms away from tangencies:

\begin{main}\label{t.entropy conjecture}
The entropy conjecture \eqref{eq.entttropyconjecture} holds for every $f\in \Diff(M)\setminus\HTclos$.
\end{main}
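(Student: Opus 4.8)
The strategy is to bootstrap from the smooth case. The plan rests on two ingredients: first, the entropy conjecture for $C^\infty$ diffeomorphisms, which is Yomdin's theorem; second, the fact---the technical heart of the paper, to be proved below---that every $f\in\Diff(M)\setminus\HTclos$ is entropy expansive with an expansivity constant that can be taken uniform on a $C^1$-neighborhood of $f$.

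The first thing I would record is the consequence of the entropy-expansiveness statement that is actually used here: if a common expansivity constant $c>0$ works for every $g$ in some neighborhood $\cU$ of $f$ in $\Diff(M)$, then the topological entropy is upper semi-continuous at $f$, i.e.\ $\limsup_{g\to f}h(g)\le h(f)$. This is a standard elaboration of Misiurewicz's theorem. Entropy expansiveness makes the metric entropy map $\mu\mapsto h_\mu(g)$ upper semi-continuous; a common expansivity constant on $\cU$ upgrades this to joint upper semi-continuity of $h_\mu(g)$ along converging pairs $(g,\mu)\to(f,\mu_\infty)$ with $g\in\cU$; and since an entropy-expansive $g$ carries a measure of maximal entropy, one has $h(g)=h_{\mu_g}(g)$, so passing to a weak$^*$-limit of the $\mu_g$ yields the claim.

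Next I would choose a sequence $g_n\to f$ in $\Diff^\infty(M)$; smooth diffeomorphisms are $C^1$-dense, and since $\Diff(M)\setminus\HTclos$ is open and contains $f$, we may assume $g_n\in\cU$ for all large $n$, and also (by $C^0$-closeness) that $g_n$ is homotopic to $f$, hence induces the same action on $H_*(M,\RR)$, so that $\spec((g_n)_*)=\spec(f_*)$. By Yomdin's theorem the entropy conjecture holds for each $C^\infty$ map $g_n$, so $\log\spec((g_n)_*)\le h(g_n)$. (Recall the mechanism: Yomdin's volume-growth estimate shows that for a $C^\infty$ map $h(g_n)$ dominates the exponential growth rate $\limsup_m\tfrac1m\log\Leb_k(g_n^m\sigma)$ of the $k$-volume of any smooth $k$-cycle $\sigma$; since the mass of $(g_n)^m_{*,k}[\sigma]=[g_n^m\sigma]$ in $H_k(M,\RR)$ is bounded by $\Leb_k(g_n^m\sigma)$, the spectral radius $\spec((g_n)_{*,k})$ cannot exceed that rate.) Combining the three observations,
$$
\log\spec(f_*)=\limsup_n\log\spec((g_n)_*)\le\limsup_n h(g_n)\le h(f),
$$
which is the assertion.

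The only genuinely hard step is the second ingredient, the entropy expansiveness of $C^1$ diffeomorphisms away from homoclinic tangencies with a locally uniform constant; once it is available, the argument above is a routine assembly of Yomdin's theorem, the openness of the class of diffeomorphisms away from tangencies, and the homotopy invariance of the homological action. I would therefore expect essentially all of the work, and the only real obstacle, to lie in that entropy-expansiveness theorem.
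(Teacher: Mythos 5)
Your proposal is correct and follows the same overall strategy as the paper: approximate $f$ by $C^\infty$ diffeomorphisms $f_n$ in the same isotopy class (so $\spec((f_n)_*)=\spec(f_*)$), invoke Yomdin's theorem for each $f_n$, and pass to the limit using upper semi-continuity of the topological entropy on $\Diff(M)\setminus\HTclos$, which in turn rests on entropy expansiveness with a locally uniform constant. The one place where you diverge from the paper is in how you get upper semi-continuity of $g\mapsto h(g)$ from the uniform expansivity constant. You go through measures of maximal entropy and joint upper semi-continuity of $(g,\mu)\mapsto h_\mu(g)$ in the style of Misiurewicz and Newhouse; the paper instead uses Bowen's identity $h(g)=h(g,\beta)$ for any open cover $\beta$ with $\diam\beta<\vep$, valid for $\vep$-entropy-expansive maps, and then the elementary observation that $g\mapsto h(g,\beta)=\inf_n\frac1n\log|\beta^n_g|$ is an infimum of upper semi-continuous functions. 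The paper's route is more direct and avoids invoking the joint semi-continuity statement (which requires choosing a partition whose boundary is null for the limit measure, a small but genuine detail that you elide); your route is still valid and perhaps conceptually closer to how one thinks about it via Corollary~C. Both correctly isolate Theorem~B/Corollary~3.2 as the only substantial obstacle.
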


This is the best result to date on the entropy conjecture in finite differentiability.
We will also comment on the behavior of diffeomorphisms with tangencies.
Before getting to that, let us briefly recall the history of this problem.

The entropy conjecture is known to hold for an open and dense subset of the space $\Homeo(M)$
of homeomorphisms, when $\dim M \neq 4$.
In fact, by Palis, Pugh, Shub, Sullivan~\cite{PPSS75}, the conjecture always holds for an open and
dense subset of any stable connected component of $\Homeo(M)$.
When the dimension is different from $4$ all connected components are stable,
by Kirby, Siebenmann~\cite{KS69}, and that is how one gets the previous statement.

Manning~\cite{Mann75} proved that the weaker inequality $\log\spec(f_{*,1})\le h(f)$
always holds for homeomorphisms in any dimension. Using Poincar\'e duality, one
deduces the full statement of the entropy conjecture for homeomorphisms on manifolds
with $\dim M \le 3$. The conjecture is also known to hold for homeomorphisms on
any infra-nilmanifold, by Marzantowicz, Misiurewicz, Przytycki~\cite{MP77b,MP08}.

Weaker versions of the conjecture, where one replaces the spectral radius of $f_*$ by
other topological invariants, have been proved in great generality.
Bowen~\cite{Bow78} showed that $\log \gamma_1 \le h(f)$ for every homeomorphism,
where $\gamma_1$ is the growth rate of the fundamental group. This is a strengthening
of Manning's result mentioned previously. Ivanov~\cite{Iva82} proved that
the asymptotic Nielsen number is also a lower bound for the topological entropy,
for every homeomorphism. Moreover, Misiurewicz, Przytycki~\cite{MP77a} showed
that the topological entropy of every homeomorphism is bounded from below by the
logarithm of the degree. For local diffeomorphisms a proof can be given using the
Perron-Fr\"obenius operator (see Oliveira, Viana~\cite{OV08}).

On the other hand, Shub~\cite{Shu74} exhibited a Lipschitz (piecewise affine)
counterexample to the entropy conjecture: while the spectral radius is strictly
positive, the topological entropy vanishes. Thus, some smoothness is necessary
for a general (not just generic) statement. A major progress was the proof,
by Yomdin~\cite{Yom87}, that the entropy conjecture is true for every $C^\infty$ diffeomorphism.
The main ingredient is a relation between topological entropy $h(f)$ and the growth rate
$v(f)$ of volume under iteration by a diffeomorphism. For $C^\infty$ diffeomorphisms the two
numbers actually coincide (that is false in finite differentiability). The entropy conjecture
is a consequence, because $\log\spec(f_{*}) \le v(f)$ for any $C^1$ diffeomorphism $f$.

The entropy conjecture has also been established for certain classes of systems with
hyperbolicity properties: Anosov diffeomorphisms and, more generally,
Axiom A diffeomorphisms with no cycles (Shub, Williams~\cite{SW75},
Ruelle, Sullivan~\cite{RS75}), and partially hyperbolic systems with
one-dimensional center bundle (Saghin, Xia~\cite{SX10}). All of these systems are
away from tangencies, of course.

\subsection{Entropy expansiveness and continuity of entropy}

Theorem~\ref{t.entropy conjecture} will be deduced from the following result:

\begin{main}\label{t.entropy expansiveness}
Every diffeomorphism $f\in \Diff(M)\setminus \HTclos$ is entropy expansive.
\end{main}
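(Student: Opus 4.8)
The plan is to derive entropy expansiveness from the dominated structure that the absence of tangencies forces on $f$. The structural input, which I would take for granted, is this: since $f\in\Diff(M)\setminus\HTclos$ lies, together with all $C^1$-nearby maps, in the $C^1$-interior of the diffeomorphisms without tangencies, the $C^1$ perturbation theory of periodic orbits (in the spirit of Wen and Gourmelon) shows that every periodic orbit carries a dominated splitting whose sub-bundles of dimension $\ge 2$ are uniformly contracting or uniformly expanding; passing to the limit set $L(f)$ — which, via Pugh's closing lemma and Ma\~n\'e's ergodic closing lemma, is where all the entropy lives — one obtains a partially hyperbolic splitting $T_{L(f)}M=E^s\oplus E^c_1\oplus\cdots\oplus E^c_l\oplus E^u$ with $E^s$ uniformly contracted, $E^u$ uniformly expanded, each $\dim E^c_i=1$, and with constants uniform on a $C^1$-neighborhood of $f$.

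Given this, it suffices to prove that for some small $\varepsilon>0$ and every $x\in M$ the infinite Bowen ball $\Phi_\varepsilon(x)=\{y\in M:\operatorname{dist}(f^nx,f^ny)\le\varepsilon\ \text{for all}\ n\in\ZZ\}$ has zero topological entropy. I would first reduce to $x\in L(f)$: for an arbitrary $x$ the orbit spends all but finitely much time near $L(f)$, and a routine argument bounds $h(f,\Phi_\varepsilon(x))$ by $\sup\{h(f,\Phi_{2\varepsilon}(p)):p\in\omega(x)\cup\alpha(x)\}$, where $\omega(x),\alpha(x)\subset L(f)$. For $x\in L(f)$, uniform contraction of $E^s$ in forward time and uniform expansion of $E^u$ in backward time, combined with locally invariant plaque families (fake foliations) tangent to $E^s\oplus E^c$ and to $E^c\oplus E^u$, where $E^c=E^c_1\oplus\cdots\oplus E^c_l$, give in the classical way that $\Phi_\varepsilon(x)$ is contained in a local center plaque $W^c_{\loc}(x)$, an $l$-dimensional $C^1$ disk. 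Since $f$ carries center plaques into one another, $\Phi_\varepsilon(x)$ is precisely the set of points of $W^c_{\loc}(x)$ whose full orbit stays in these plaques, and $f$ acts on them respecting the dominated flag $E^c_1\subset E^c_1\oplus E^c_2\subset\cdots\subset E^c$.

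I would then induct on $l$. If $l=0$ the set $L(f)$ is hyperbolic, hence expansive, and $\Phi_\varepsilon(x)$ is a point. If $l=1$ the plaque $W^c_{\loc}(x)$ is an arc; for each $j$ the map $f^j$ sends it homeomorphically onto an arc, so pulling back a $\delta$-mesh of the (small-diameter) arc $f^j(\Phi_\varepsilon(x))$ yields finitely many subarcs, and the common refinement over $0\le j<n$ of these partitions of $W^c_{\loc}(x)$ has only $O(n)$ cells, each $(n,\delta)$-small; hence every $(n,\delta)$-separated subset of $\Phi_\varepsilon(x)$ has size $O(n)$ and $h(f,\Phi_\varepsilon(x))=0$. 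If $l\ge2$, quotient $W^c_{\loc}(x)$ by the one-dimensional fake foliation tangent to the dominating center bundle $E^c_1$; local invariance of the plaque families makes the quotient projection $\pi$ continuous and $f$-equivariant, its image $Y$ lies in the infinite $\varepsilon$-Bowen ball of the induced dynamics on an $(l-1)$-dimensional transversal carrying a dominated flag with $l-1$ one-dimensional center bundles, and the fibers of $\pi$ are subsets of one-dimensional arcs. The fibers have zero entropy by the $l=1$ case, $h(f,Y)=0$ by the inductive hypothesis, and Bowen's inequality $h(f,\Phi_\varepsilon(x))\le h(f,Y)+\sup_y h(f,\pi^{-1}(y))$ closes the induction. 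Uniformity of all constants on a $C^1$-neighborhood of $f$ makes the expansivity constant locally uniform.

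The main obstacle is twofold. First, the structural step — obtaining a partially hyperbolic splitting with one-dimensional center sub-bundles over a set carrying all the entropy, with locally uniform constants — rests on subtle $C^1$ perturbation lemmas and on the fact that a non-dominated or higher-dimensional ``neutral'' bundle over a periodic orbit can be perturbed to create a tangency. Second, and more technical here, is making the center-plaque quotient rigorous: the plaque families are only locally invariant and a priori merely continuous, the sub-plaques tangent to $E^c_1$ form a genuine foliation only at small scale, and one must verify that $\pi$ is well defined, continuous and equivariant and that the transverse system again satisfies the inductive hypothesis — it is exactly the domination of $E^c_1$ over the remaining center bundles that keeps this coherent under iteration.
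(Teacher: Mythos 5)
Your outline captures the right geometric picture (dominated splitting, center plaques, low-dimensional center gives zero entropy), but there are two places where you smuggle in strictly stronger hypotheses than the absence of tangencies actually delivers, and a third where a step you label ``routine'' is in fact the central technical point of the paper.

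First, you posit that over the limit set there is a \emph{partially hyperbolic} splitting, with $E^s$ uniformly contracted and $E^u$ uniformly expanded. Being away from tangencies does not give this. Wen's perturbation result (Proposition~\ref{p.uniform on large periodic orbits} in the paper) and Ma\~n\'e's ergodic closing lemma yield only a dominated splitting $E^1\oplus E^2\oplus E^3$ with $\dim E^2\le 1$ together with \emph{asymptotic} contraction along $E^1$ and expansion along $E^3$ in the sense of equation~\eqref{eq.asymptotic}, i.e.\ negative averaged logarithmic norms. There is no uniform bound; there can be long stretches where $E^1$ is expanded. This matters: without uniformity, the classical argument that the infinite Bowen ball lies in a center plaque simply does not run for every point. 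The paper converts the asymptotic information into contraction of the fake leaves only at Pliss times (Lemma~\ref{Pliss}), and Pliss times are guaranteed only for $\mu$-almost every point, not for every $x\in L(f)$.

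Second, precisely because the leafwise contraction is established only for $\mu$-a.e.\ $x$ and every invariant $\mu$, one must then upgrade from this ``almost'' statement to the desired statement at every $x\in M$. You propose to do this by a ``routine argument'' bounding $h(f,\Phi_\vep(x))$ by $\sup\{h(f,\Phi_{2\vep}(p)):p\in\omega(x)\cup\alpha(x)\}$. That bound is not routine and, as stated, the naive inclusion argument fails: $y\in\Phi_\vep(x)$ keeps $f^{n_k}(y)$ within $2\vep$ of $p$ at the recurrence times $n_k$, but nothing forces $f^{n_k}(y)\in\Phi_{2\vep}(p)$, since the future orbit of $f^{n_k}(x)$ need not track the orbit of $p$ with any uniformity. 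The passage from ``for $\mu$-a.e.\ $x$ and all invariant $\mu$'' to ``for all $x$'' is exactly what the paper calls almost entropy expansiveness and is handled by a nontrivial counting argument with Bowen's spanning-set lemma, empirical measures, and a compactness/Pliss-style selection (Propositions~\ref{total implies entropy expansive} and \ref{from essential to real}). Omitting it leaves a genuine hole.

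Finally, a smaller point: your induction on the number $l$ of one-dimensional center bundles is unnecessary given the tangency hypothesis. Wen's theorem gives that \emph{at most one} exponent of a periodic orbit can lie in the neutral band, which forces $\dim E^c\le 1$ over periodic orbits, and hence (passing to Hausdorff limits of periodic orbits via the ergodic closing lemma) over supports of ergodic measures. So $l\in\{0,1\}$ always, the center plaques are points or bounded-length arcs, and the zero-entropy conclusion is immediate. You are right that the quotient-by-foliation step for $l\ge 2$ would be delicate; the good news is it never arises here.
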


\begin{Rem}\label{r.converse}
In contrast, there is a residual subset $\cR$ of $\Diff(M)$ such that any $f\in \cR\cap \HTclos$
is \emph{not} entropy expansive. This is related to results of Downarwicz, Newhouse~\cite{DN05}.
 A proof will appear in Section~\ref{ss.proofs}.
\end{Rem}

The notion of entropy expansiveness will be recalled in Section~\ref{s.entropytheory}.
It was first introduced by Bowen~\cite{Bow72b}, who observed that for entropy expansive
maps the metric entropy function (defined in the space of invariant probabilities)
$$
\mu \mapsto h_\mu(f)
$$
is upper semi-continuous. In particular, for such maps there always exists some
measure of maximum entropy. In view of these observations,
Theorem~\ref{t.entropy expansiveness} has the following direct consequence:

\begin{maincor}\label{c.main}
For any $f\in \Diff(M)\setminus \HTclos$ the entropy function $\mu\to h_{\mu}(f)$
is upper semi-continuous and, thus, there is some invariant probability $\mu$
with $h_\mu(f)=h(f)$.
\end{maincor}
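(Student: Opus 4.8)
The plan is to deduce Corollary~\ref{c.main} directly from Theorem~\ref{t.entropy expansiveness} together with the general principle, due to Bowen~\cite{Bow72b}, that entropy expansiveness forces upper semi-continuity of the metric entropy function. Concretely, I would first invoke Theorem~\ref{t.entropy expansiveness} to obtain that $f$ is entropy expansive, say with expansivity constant $\varepsilon>0$; this means that the \emph{tail entropy} $h^*(f,\varepsilon)$ vanishes, i.e. for every $x\in M$ the set of points whose entire orbit stays $\varepsilon$-close to the orbit of $x$ carries zero topological entropy. The standard estimate (see Bowen~\cite{Bow72b}) is that for any invariant probability $\mu$ and any finite measurable partition $\mathcal{P}$ of diameter less than $\varepsilon$, one has $h_\mu(f)=h_\mu(f,\mathcal{P})$, so the entropy is computed by a single fixed partition rather than as a supremum over ever-finer partitions.

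The core of the argument is then the elementary fact that, for a \emph{fixed} finite partition $\mathcal{P}$ whose boundary has zero $\mu$-measure — which one may arrange by choosing the atoms to be, for instance, small balls of a radius at which the measure of the boundary sphere vanishes, a condition satisfied for all but countably many radii — the map $\nu\mapsto h_\nu(f,\mathcal{P})$ is upper semi-continuous at $\mu$ in the weak-$*$ topology. This is because $h_\nu(f,\mathcal{P})=\inf_{n}\frac1n H_\nu\big(\bigvee_{i=0}^{n-1}f^{-i}\mathcal{P}\big)$ is an infimum of functions $\nu\mapsto \frac1n H_\nu(\bigvee_{i=0}^{n-1}f^{-i}\mathcal{P})$, each of which is continuous at measures giving zero mass to the (finite union of) boundaries of the refined partition, since $t\mapsto -t\log t$ is continuous and $\nu\mapsto\nu(A)$ is continuous at such measures by the portmanteau theorem. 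An infimum of functions continuous at $\mu$ is upper semi-continuous at $\mu$. Combining this with $h_\nu(f)=h_\nu(f,\mathcal{P})$ for all $\nu$ — valid because $\diam\mathcal{P}<\varepsilon$ and $f$ is entropy expansive — yields upper semi-continuity of $\nu\mapsto h_\nu(f)$ at $\mu$, and since $\mu$ was arbitrary, on all of the space of invariant probabilities.

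Finally, the existence of a measure of maximal entropy follows formally: the space $\mathcal{M}_f(M)$ of $f$-invariant Borel probabilities is compact in the weak-$*$ topology, an upper semi-continuous real function on a compact space attains its supremum, and by the variational principle that supremum equals the topological entropy $h(f)$. Hence there is $\mu\in\mathcal{M}_f(M)$ with $h_\mu(f)=h(f)$, as claimed.

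The main delicate point — though it is minor and entirely standard — is the bookkeeping around the partition boundaries: one must ensure simultaneously that $\diam\mathcal{P}<\varepsilon$ and that $\mu(\partial\mathcal{P})=0$, and moreover that this can be done uniformly enough to get semi-continuity \emph{at} $\mu$ (it suffices to pick $\mathcal{P}$ depending on $\mu$, since upper semi-continuity is a pointwise notion). No genuine obstacle arises here because the hard analytic content — the vanishing of the tail entropy with a locally uniform constant — is exactly what Theorem~\ref{t.entropy expansiveness} provides; the present corollary is purely a soft consequence of that together with classical entropy theory, which we will recall in Section~\ref{s.entropytheory}.
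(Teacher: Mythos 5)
Your proposal is correct and follows essentially the same route as the paper: deduce entropy expansiveness from Theorem~\ref{t.entropy expansiveness}, invoke Bowen's observation from~\cite{Bow72b} that entropy expansiveness yields upper semi-continuity of $\mu\mapsto h_\mu(f)$, and conclude existence of a maximal entropy measure by compactness of the space of invariant probabilities. The paper simply cites Bowen's result, while you supply its standard proof (fixing a partition $\mathcal{P}$ with $\diam\mathcal{P}<\varepsilon$ and $\mu(\partial\mathcal{P})=0$, using $h_\nu(f)=h_\nu(f,\mathcal{P})$ and the $\inf$ characterization); this is the right argument and no gap appears.
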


The first examples of $C^r$ diffeomorphisms without measures of maximum entropy
were given by Misiurewicz~\cite{Mis73}, for each $1 \leq r < \infty$.
He also introduced a weaker condition, called asymptotic entropy expansiveness,
that suffices for upper semi-continuity of the metric entropy function.
%This condition will be recalled in Section~\ref{s.entropytheory}.
In addition, Misiurewicz~\cite{Mis73} gave examples of $C^r$ diffeomorphisms,
$1 \leq r <\infty$ where the topological entropy function
$$
f \mapsto h(f).
$$
fails to be upper semi-continuous.
For $C^\infty$ diffeomorphisms, Newhouse~\cite{New89} proved that the metric entropy
function is always upper semi-continuous, and Yomdin~\cite{Yom87} proved upper
semi-continuity of the topological entropy function. Newhouse's result has been improved
by Buzzi~\cite{Buz97}, who showed that every $C^\infty$ diffeomorphism is asymptotically
entropy expansive. Yomdin's semi-continuity result also extends to every $C^1$ diffeomorphism
away from tangencies:

\begin{main}\label{t.upper-semi-continuity of topological entropy}
The topological entropy is upper semi-continuous on $\Diff(M)\setminus\HTclos$.
\end{main}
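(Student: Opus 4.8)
The plan is to deduce Theorem~\ref{t.upper-semi-continuity of topological entropy} from Theorem~\ref{t.entropy expansiveness} together with the quantitative, \emph{locally uniform}, entropy expansiveness asserted in the abstract. Recall that entropy expansiveness with constant $\vep$ means that the tail entropy $h^*(f,\vep)=\sup_x h(f,\Gamma_\vep(x))$ vanishes, where $\Gamma_\vep(x)=\{y:d(f^n x,f^n y)\le\vep \text{ for all } n\in\ZZ\}$; the relevant point, which is established in the proof of Theorem~\ref{t.entropy expansiveness}, is that the constant $\vep$ can be chosen uniformly on a whole $C^1$-neighborhood $\cU$ of any given $f_0\in\Diff(M)\setminus\HTclos$. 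Fix such an $f_0$ and such a $\cU$.

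First I would use Bowen's inequality $h(f)\le h(f,\vep)+h^*(f,\vep)$, where $h(f,\vep)$ is the usual $(n,\vep)$-spanning/separated set growth rate computed at scale $\vep$. Since every $f\in\cU$ is entropy expansive with the \emph{same} constant $\vep$, the tail term vanishes for all of them, so $h(f)=h(f,\vep)$ on $\cU$. Thus it suffices to prove that $f\mapsto h(f,\vep)$ is upper semi-continuous at $f_0$ — and in fact the finite-scale entropy $h(f,\vep)$ is always upper semi-continuous in $f$ for \emph{any} fixed $\vep$, by an elementary argument: given $\delta>0$, pick $n$ with $\frac1n\log s_n(f_0,2\vep)<h(f_0,2\vep)+\delta$ (here $s_n$ counts maximal $(n,\cdot)$-separated sets); for $g$ sufficiently $C^0$-close to $f_0$, any $(n,\vep)$-separated set for $g$ is $(n,2\vep)$-separated for $f_0$, hence $s_n(g,\vep)\le s_n(f_0,2\vep)$, giving $h(g,\vep)\le\frac1n\log s_n(g,\vep)\le h(f_0,2\vep)+\delta$. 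Combining, for $g\in\cU$ close to $f_0$ we get $h(g)=h(g,\vep)\le h(f_0,2\vep)+\delta = h(f_0)+\delta$ (the last equality again by entropy expansiveness of $f_0$, since $2\vep$ is also an expansivity constant once $\vep$ is, or one simply takes $\vep$ small enough from the start). Letting $\delta\to0$ yields upper semi-continuity at $f_0$, and since $f_0$ was arbitrary in $\Diff(M)\setminus\HTclos$, the theorem follows.

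\textbf{The main obstacle} is therefore not in this deduction, which is soft, but in justifying the \emph{local uniformity} of the expansivity constant — i.e. in verifying that the proof of Theorem~\ref{t.entropy expansiveness} actually delivers a single $\vep=\vep(\cU)$ valid throughout a neighborhood rather than just a constant depending on the individual map. This is where the structure away from tangencies enters: the absence of tangencies in a $C^1$-robust way (which is what $f_0\notin\HTclos$ provides, via a neighborhood) yields uniform estimates on the relevant dominated splittings and on the local geometry of stable/unstable objects, and these estimates are what bound the size of $\Gamma_\vep(x)$ uniformly. I would make sure each constant produced along the proof of Theorem~\ref{t.entropy expansiveness} is tracked to depend only on the neighborhood $\cU$, not on the particular $f\in\cU$, and record this as an explicit strengthening of Theorem~\ref{t.entropy expansiveness} to be invoked here. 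Once that bookkeeping is in place, the semi-continuity statement is immediate from Bowen's inequality as above.
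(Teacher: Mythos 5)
Your overall strategy coincides with the paper's: invoke the \emph{locally uniform} expansivity constant from the proof of Theorem~\ref{t.entropy expansiveness} (recorded there as Corollary~\ref{c.entropyexpansive}), use Bowen's identity that for an $\vep$-entropy expansive map the topological entropy is already captured at scale~$\vep$, and then argue that entropy at a fixed scale is upper semi-continuous in the map. This is exactly what Lemma~\ref{l.upper continuous topological entropy} does. So the plan is right; however, the execution of the ``finite-scale entropy is u.s.c.'' step has a genuine gap, and there is a small sign error.

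The gap: you write $h(g,\vep)\le\frac1n\log s_n(g,\vep)$, but $h(g,\vep)=\limsup_n\frac1n\log s_n(g,\vep)$ is a $\limsup$, not an infimum --- the sequence $s_n$ is not submultiplicative, so bounding a single term $\frac1n\log s_n$ tells you nothing about the $\limsup$. The paper circumvents this by passing to the open-cover quantity $h(f,\beta)$ with $\diam\beta<\vep$: subadditivity $|\beta^{m+n}|\le|\beta^m||\beta^n|$ makes $h(f,\beta)=\inf_n\frac1n\log|\beta^n|$ a genuine infimum of functions $f\mapsto\frac1n\log|\beta^n|$ each of which is u.s.c., and $h(f)=r(M,\vep)\le h(f,\beta)\le h(f)$ by Remark~\ref{equivalence of definitions of entropy} and Bowen's theorem. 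If you want to stay with spanning/separated sets you would need an analogous ``single $n$ controls the whole limit'' mechanism, which is precisely what the cover-definition provides for free. The sign error: perturbing $f_0$ to $g$ \emph{shrinks} separations, so an $(n,\vep)$-separated set for $g$ is $(n,\vep/2)$-separated (not $(n,2\vep)$-separated) for $f_0$, giving $s_n(g,\vep)\le s_n(f_0,\vep/2)$; correspondingly, it is $\vep/2$ (a \emph{smaller} scale) that inherits the expansivity property from $\vep$, not $2\vep$ --- your claim that ``$2\vep$ is also an expansivity constant once $\vep$ is'' is false in general, since $B_\infty(x,2\vep)\supset B_\infty(x,\vep)$. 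These two errors cancel in spirit, but the $\limsup$ issue is the one that actually needs the paper's cover-based argument to be resolved.
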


Closing this section, let us observe that the metric entropy function is usually not
lower semi-continuous.
Indeed, by the ergodic closing lemma of Ma\~n\'e~\cite{Man82}, there is a residual
subset $\cR_1$ of $\Diff(M)$ such that for every $f\in\cR_1$ every ergodic invariant
measure is approximated by invariant measures supported on periodic orbits.
Thus, for every $f\in \cR_1$, either $h(f)=0$ or the metric entropy function fails to
be lower semi-continuous.
For maps on compact surfaces without boundary, it follows from Katok~\cite{Ka80}
that the topological entropy function is lower semi-continuous on $\Diff^r(M)$,
for all $r>1$. By Gromov~\cite{Gr85}, this does not extend to surfaces with boundary.

\subsection{Symbolic extensions}

A \emph{symbolic extension} of a map $f:M\to M$ is a subshift $\sigma:Y \to Y$
over a finite alphabet, together with a continuous surjective map $\pi:Y \to M$
such that $f\circ\pi=\pi\circ \sigma$. Markov partitions for uniformly hyperbolic
systems (Bowen~\cite{Bow75a}) are the classical prototype. In general, a
symbolic extension may carry a lot more dynamics than the original map $f$.
We call a symbolic extension \emph{principal} if it is minimal in this regard:
$h_\mu(f) = h_{ext}^{\pi}(\mu)$, where $h_{ext}^{\pi}(\mu)$ is the supremum
of the entropy $h_{\nu}(\sigma)$ of the shift $\sigma$ over all invariant probabilities
$\nu$ such that $\pi_{*}\nu=\mu$.

\begin{maincor}\label{c.principal extension}
Any $f\in \Diff(M)\setminus \HTclos$ admits a principal symbolic extension.
\end{maincor}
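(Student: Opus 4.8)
The plan is to derive Corollary~\ref{c.principal extension} directly from Theorem~\ref{t.entropy expansiveness} together with the general theory of symbolic extensions developed by Boyle and Downarowicz~\cite{BD04}. The key structural fact is that a topological dynamical system admits a principal symbolic extension whenever its \emph{entropy structure} (the canonical sequence of functions on the simplex of invariant measures introduced by Downarowicz) converges uniformly; and for entropy expansive systems this convergence is immediate because a single refinement of the partition already computes the entropy up to the expansivity constant. More precisely, entropy expansiveness implies asymptotic $h$-expansiveness, i.e. $h^*(f)=0$ in the sense of Misiurewicz, and by the results of Boyle, Fiebig, Fiebig~\cite{BFF02} (and also Buzzi~\cite{Buz97}), every asymptotically $h$-expansive system admits a principal symbolic extension.

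Concretely I would proceed as follows. First, recall from Section~\ref{s.entropytheory} that entropy expansiveness of $f$ with constant $\varepsilon>0$ means $h(f,\overline{B_\varepsilon}(x))=0$ uniformly, where $\overline{B_\varepsilon}(x)$ is the set of points whose entire orbit stays $\varepsilon$-close to that of $x$; this is exactly Bowen's condition, and it trivially implies Misiurewicz's asymptotic $h$-expansiveness (the tail entropy $h^*(f)$ vanishes). Second, invoke the theorem of Boyle, Fiebig, Fiebig~\cite{BFF02}: any asymptotically $h$-expansive homeomorphism of a compact metric space has a principal symbolic extension. Since $f\in\Diff(M)\setminus\HTclos$ is such a homeomorphism by Theorem~\ref{t.entropy expansiveness}, the corollary follows. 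Alternatively, one can phrase it through the entropy structure: asymptotic $h$-expansiveness forces the defect function $u_\infty$ (the limit of the nonincreasing differences in the entropy structure) to vanish, hence the symbolic extension entropy function equals the entropy function itself, which by the variational principle of~\cite{BD04} is precisely the definition of a principal symbolic extension.

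The main obstacle here is not the dynamics — Theorem~\ref{t.entropy expansiveness} does all the heavy lifting — but rather making sure the cited abstract machinery applies verbatim: one must check that $M$ compact metric and $f$ a homeomorphism is the only hypothesis needed in~\cite{BFF02,BD04}, and that ``asymptotically $h$-expansive'' in their sense coincides with the consequence of entropy expansiveness as defined in Section~\ref{s.entropytheory}. Both are standard, so the proof is genuinely short; the only care needed is in quoting the correct equivalence between entropy expansiveness, vanishing tail entropy, and the existence of a principal symbolic extension, and in noting that principality gives the stated equality $h_\mu(f)=h_{ext}^{\pi}(\mu)$ for every invariant $\mu$, not merely for the measure of maximal entropy. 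I would therefore write the proof as a two or three sentence deduction, with the substantive references being~\cite{BD04} and~\cite{BFF02} (and Buzzi~\cite{Buz97} as an alternative route), and flag that the converse statement — failure of symbolic extensions for generic diffeomorphisms in $\HTclos$, announced in the abstract — is proved separately in Section~\ref{ss.proofs} using the Downarowicz--Newhouse~\cite{DN05} techniques.
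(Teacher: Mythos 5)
Your proof is correct and follows the same route as the paper: Theorem~\ref{t.entropy expansiveness} gives entropy expansiveness, hence asymptotic $h$-expansiveness, and then the theorem of Boyle, Fiebig, Fiebig~\cite{BFF02} yields a principal symbolic extension. The additional commentary on entropy structures and~\cite{BD04} is sound but not needed; the paper's proof is exactly this two-step deduction.
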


This follows directly from Theorem~\ref{t.entropy expansiveness} together with
the observation by Boyle, Fiebig, Fiebig~\cite{BFF02} that every asymptotically
entropy expansive diffeomorphism admits a principal symbolic extension.

Let us also point out that D\'\i az, Fisher, Pacifico, Vieitez~\cite{DFPV,PV08} have, recently,
constructed principal symbolic extensions for partially hyperbolic diffeomorphism admitting
an invariant splitting into one dimensional subbundles. Indeed, they prove that such maps
are entropy expansive. This is in contrast with previous work of
Downarwicz, Newhouse~\cite{DN05}, based on the theory developed by
Boyle, Downarwicz~\cite{BD04}, where it is shown that nonexistence of symbolic
extensions is typical on the closure of the set of  area preserving diffeomorphisms
with homoclinic tangencies. Also very recently, Catalan, Tahzibi~\cite{CTh} proved
non-existence of symbolic extensions for generic symplectic diffeomorphisms
outside the Anosov domain. In this setting, they also find lower bounds for the
topological entropy in terms of the eigenvalues at periodic points.

\section{Entropy theory}\label{s.entropytheory}

Here we recall some basic facts about entropy. See Bowen~\cite{Bow72b} and
Walters~\cite{Wa82} for more information. Moreover, we propose an alternative definition
of entropy expansiveness, in terms of invariant measures (almost entropy expansiveness),
that will be useful in the sequel.

\subsection{Definitions and statements}

Throughout, $f:M\to M$ is a continuous map on a compact metric space $M$.
Let $K$ be a subset of $M$. For each $\vep>0$ and $n\ge 1$, we consider the following objects.
The \emph{dynamical ball} of radius $\vep>0$ and length $n$ around $x\in M$ is the set
$$
B_n(x,\vep)=\{y\in M: d(f^j(x),f^j(y))\le\vep \text{ for every } 0 \le j < n\}.
$$
A set $E\subset M$ is \emph{$(n,\vep)$-spanning for $K$} if for any $x\in K$ there is $y\in E$
such that $d(f^ix,f^iy)\leq\vep$ for all $0\leq i<n$. In other words, the dynamical balls
$B_n(y,\vep)$, $y\in E$ cover $K$. Let $r_n(K,\vep)$ denote the smallest cardinality of any
$(n,\vep)$-spanning set, and
$$
r(K,\vep)=\limsup_{n\to+\infty}\frac{1}{n}\log r_n(K,\vep).
$$
A set $F\subset K$ is \emph{$(n,\vep)$-separated} if for any distinct points $x$ and $y$ in $F$
there is $0 \le i < n$ such that $d(f^ix,f^iy)>\vep$.  That is, no element of $F$ belongs to the
dynamical ball $B_n(y,\vep)$ of another.
Let $s_n(K,\vep)$ denote the largest cardinality of any $(n,\vep)$-separated set, and
$$
s(K,\vep)=\limsup_{n\to+\infty}\frac{1}{n}\log s_n(K,\vep).
$$
The \emph{topological entropy of $f$ on $K$} is defined by
$$
h(f,K)
= \lim_{\vep \to 0} s(K,\vep)
= \lim_{\vep \to 0} r(K,\vep).
$$
The \emph{topological entropy of $f$} is defined by $h(f)=h(f,M)$.
Given any finite open cover $\beta$ of $M$, let
\begin{equation}\label{eq.hbetaf}
h(f,\beta)=\lim_{n\to\infty} \frac 1n \log |\beta^n|
          =\inf_{n\ge 1} \frac 1n \log |\beta^n|,
\end{equation}
where $\beta^n=\{A_0 \cap f^{-1}A_1\cap\cdots\cap f^{-n+1}A_{n-1)}: A_i \in \beta \text{ for } 0\leq i\leq n-1\}$
and $|\beta^n|$ is the smallest cardinality of a subcover of $\beta^n$.
The topological entropy $h(f)$ coincides with the supremum of $h(f,\beta)$ over all finite open covers.

%\begin{Rem}\label{monotonicity}
%If $K_1\subset K_2$ then $s_n(K_1,\vep)\le s_n(K_2,\vep)$ and
%$r_n(K_1,\vep) \le r_n(K_2,\vep)$ for every $\vep>0$.
%In particular, $h(f,K_1)\le h(f,K_2)$.
%\end{Rem}

\begin{Rem}\label{equivalence of definitions of entropy}
If $\diam(\beta)<\varepsilon$ then $r_n(M,\vep)\leq s_n(M,\vep)\le|\beta^n|$ for every $n$.
Hence, $r(M,\vep)\leq s(M,\vep)\leq h(f,\beta)$.
\end{Rem}

\begin{Lem}[Bowen~\cite{Bow72b}]\label{l.Bowen}
Let $0=t_0<t_1<\cdots<t_{r-1}<t_r=n$ and, for $0\le i < r$, let
$E_i$ be a $(t_{i+1}-t_i,\vep)$-spanning set for $f^{t_i}(F)$.
Then $$r_n(F,2\vep)\leq \prod_{0\leq i <r}\#(E_i).$$
\end{Lem}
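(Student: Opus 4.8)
The plan is to cut the orbit segment of length $n$ into the $r$ consecutive blocks $[t_i,t_{i+1})$ determined by the partition $0=t_0<t_1<\cdots<t_r=n$, control each block separately by means of the prescribed spanning set $E_i$, and then observe that the family of blockwise spanning choices determines a point up to total error $2\vep$ along the whole segment. First I would fix, for each $x\in F$ and each $0\le i<r$, a point $y_i=y_i(x)\in E_i$ that $(t_{i+1}-t_i,\vep)$-spans $f^{t_i}(x)\in f^{t_i}(F)$; such a point exists by hypothesis. This produces a map $x\mapsto (y_0(x),\dots,y_{r-1}(x))\in E_0\times\cdots\times E_{r-1}$. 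For each value $\bar y=(y_0,\dots,y_{r-1})$ in the image of this map, choose once and for all a single preimage $z(\bar y)\in F$, and let $E\subset F$ be the finite set of all these chosen points. Then $\#E\le \prod_{0\le i<r}\#(E_i)$.

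Next I would verify that $E$ is $(n,2\vep)$-spanning for $F$. Given $x\in F$, put $\bar y=(y_0(x),\dots,y_{r-1}(x))$ and $z=z(\bar y)$, so that $x$ and $z$ carry identical block data $\bar y$. For any $0\le k<n$ there is a unique index $i$ with $t_i\le k<t_{i+1}$; writing $k=t_i+j$ with $0\le j<t_{i+1}-t_i$ and using $f^k=f^{\,j}\circ f^{t_i}$ together with the triangle inequality,
$$
d(f^k x,f^k z)\le d\big(f^{\,j}(f^{t_i}x),f^{\,j}(y_i)\big)+d\big(f^{\,j}(y_i),f^{\,j}(f^{t_i}z)\big)\le \vep+\vep=2\vep,
$$
where the two estimates use respectively that $y_i$ $(t_{i+1}-t_i,\vep)$-spans $f^{t_i}x$ and that $y_i$ $(t_{i+1}-t_i,\vep)$-spans $f^{t_i}z$. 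Hence $x\in B_n(z,2\vep)$, which is exactly the spanning condition. Therefore $r_n(F,2\vep)\le \#E\le\prod_{0\le i<r}\#(E_i)$, as claimed.

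I do not expect any genuine analytic obstacle here; the argument is pure bookkeeping. The only two points that need care are that the spanning points $y_i$ are allowed to lie anywhere in $M$ (not necessarily in $f^{t_i}(F)$), which is harmless, and that the choice function $\bar y\mapsto z(\bar y)$ must be fixed in advance so that, for the given $x$, the comparison point $z$ really does share every block label of $x$. The factor $2$ in the conclusion is precisely the cost of comparing $x$ to $z$ indirectly, via the intermediate points $y_i$, rather than directly.
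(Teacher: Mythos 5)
Your proof is correct, and since the paper itself gives no proof of this lemma (it simply cites Bowen), the only meaningful comparison is with Bowen's original argument — which your proof essentially reproduces: partition the time interval into the blocks $[t_i,t_{i+1})$, encode each $x\in F$ by the tuple of spanning representatives $(y_0(x),\dots,y_{r-1}(x))$, choose one preimage of each tuple to form a set $E$ of cardinality at most $\prod\#(E_i)$, and then use the triangle inequality through the common label $y_i$ to bound $d(f^kx,f^kz)$ by $2\vep$ on each block. The bookkeeping is handled correctly, in particular the crucial point that $z=z(\bar y)$ shares every block label with $x$, which is what makes the factor $2\vep$ (rather than just $\vep$) appear.
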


%\begin{Lem}\label{l.monotone}
%If $K_1\subset K_2$ then $s_n(K_1,\vep)\le s_n(K_2,\vep)$ and
%$r_n(K_1,\vep) \le r_n(K_2,\vep/2)$ for every $\vep>0$.
%In particular, $h(f,K_1)\le h(f,K_2)$.
%\end{Lem}
%
%\begin{proof}
%The first claim is clear: every $(n,\vep)$-separate subset of $K_1$ is an $(n,\vep)$-separated
%subset of $K_2$.
%To prove the second one, let  $F$ be an $(n,\vep)$-separated subset of $K_1$ with maximal cardinality.
%Then $F$ is an $(n,\vep)$-spanning subset of $K_1$, and so $r_n(K_1,\vep) \le \#F$.
%Next, let $E$ be an $(n,\vep/2)$-spanning subset of $K_2$ with minimal cardinality.
%Define $\phi: F\to E$ as follows: for each $x\in F$ take $\phi(x)$ to be any of the points
%in $E$ such that $d(f^j(x),f^j(\phi(x)))\le\vep/2$ for every $0\leq j <n$.
%The map $\phi$ is injective: if $x$ and $y$ are such that $\phi(x)=\phi(y)$ then
%$d(f^j(x),f^j(y)) \leq \vep$ for every $0 \leq j < n$ and, since $F$ is $(n,\vep)$-separated,
%this implies that $x=y$. Consequently, $\#F\le\#E=r_n(K_2,\vep/2)$.
%Together with the previous inequality, this proves the claim.
%The last part of the lemma is a direct consequence.
%\end{proof}

Now let $\mu$ be an $f$-invariant probability measure and $\xi=\{A_1,\cdots,A_k\}$ be a finite partition
of $M$ into measurable sets. The \emph{entropy of $\xi$ with respect to $\mu$} is
$$
H_{\mu}(f,\,\xi)=-\sum_{i=1}^{k}\mu(A_i)\log \mu(A_i).
$$
The \emph{entropy of $f$ with respect to $\xi$ and $\mu$} is given by
$$
h_{\mu}(f,\,\xi)=\lim_{n\to+\infty}\frac{1}{n}\log H_{\mu}(f,\,\xi^n).
$$
Finally, the \emph{entropy of $f$ with respect to $\mu$} is given by
$$
h_{\mu}(f)=\sup_{\xi}h_{\mu}(f,\xi),
$$
where $\xi$ ranges over all finite measurable partitions of $M$.

For each $x\in M$ and $\vep>0$, let $B_{\infty}(x,\vep)=\{y: d(f^n(x),f^n(y))\le\vep\text{ for } n\ge 0\}$.
The map $f$ is \emph{entropy expansive} if there exists $\vep>0$ such that
$$
\sup_{x\in M}h(f,B_{\infty}(x,\vep))=0.
$$
Then we say that $f$ is \emph{$\vep$-entropy expansive}.
When $f$ is a homeomorphism, one may replace $B_{\infty}(x,\vep)$ by
$B^{\pm}_{\infty}(x,\vep) =\{y: d(f^n(x),f^n(y))\le\vep\text{ for } n\in\ZZ\}$:
indeed, Bowen~\cite[Corollary~2.3]{Bow72b} gives that
$\sup_x h(f,B_{\infty}(x,\vep)) = \sup_x h(f, B^{\pm}_{\infty}(x,\vep)) $ for every $\vep>0$.
%More generally, $f$ is \emph{asymptotically entropy expansive} (Misiurewicz~\cite{Mis73}) if
%$$
%\lim_{\vep\to0}\sup_{x\in M}h(f,B_{\infty}(x,\vep))=0.
%$$

\begin{Lem}\label{l.upper continuous topological entropy}
Let $\cW\subset\Homeo(M)$ and $\vep>0$ be such that every $f\in\cW$ is $\vep$-entropy expansive.
Then the topological entropy $f\mapsto h(f)$ is upper semi-continuous on $\cW$.
\end{Lem}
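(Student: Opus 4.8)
The plan is to turn the uniform entropy-expansiveness constant into a \emph{finite} combinatorial description of $h(f)$ that is valid simultaneously for every $f\in\cW$, and then to compare that finite combinatorics for $f$ and for maps $C^0$-close to it.

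\textbf{Step 1 (one cover suffices).} First I would recall, following Bowen~\cite{Bow72b}, that $\vep$-entropy expansiveness forces $h(f)=s(M,\vep)$ for every $f\in\cW$. Since $s(M,\delta)$ is monotone in $\delta$ one always has $s(M,\vep)\le h(f)$, so only the reverse inequality is at stake. Fixing $\delta>0$ and a block length $m$, one covers $M$ by dynamical balls $B_n(x,\vep)$, splits $[0,n)$ into $\lfloor n/m\rfloor$ blocks of length $m$ (plus a bounded remainder), and applies Lemma~\ref{l.Bowen} inside each $B_n(x,\vep)$ to the spanning sets of the pieces $f^{jm}(B_n(x,\vep))\subseteq B_m(f^{jm}x,\vep)$; this gives
$$
r(M,2\delta)\ \le\ r(M,\vep)\ +\ \inf_{m\ge1}\frac1m\log\Bigl(\sup_{z\in M}r_m\bigl(B_m(z,\vep),\delta\bigr)\Bigr).
$$
The correction term is non-negative and tends to $0$ as $\delta\to0$: this is exactly where the hypothesis is used, through a compactness argument showing that the finite-time balls $B_m(z,\vep)$ carry sub-exponentially many orbits at arbitrarily fine scales because $\sup_z h(f,B_\infty(z,\vep))=0$. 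Letting $\delta\to0$ yields $h(f)\le r(M,\vep)\le s(M,\vep)\le h(f)$, hence $h(f)=s(M,\vep)$. Combining this with Remark~\ref{equivalence of definitions of entropy}, for \emph{any} finite open cover $\beta$ with $\diam(\beta)<\vep$ one gets $s(M,\vep)\le h(f,\beta)\le h(f)=s(M,\vep)$, so by \eqref{eq.hbetaf}
$$
h(f)\ =\ h(f,\beta)\ =\ \inf_{n\ge1}\frac1n\log|\beta^n_f|\qquad\text{for every }f\in\cW,
$$
where $\beta^n_f$ denotes the refinement $\beta^n$ computed with respect to $f$.

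\textbf{Step 2 (perturbation).} Now fix $f_0\in\cW$ and $a>h(f_0)$; I would produce a $C^0$-neighbourhood $\cV$ of $f_0$ on which $h<a$. Choose a finite open cover $\beta=\{B_1,\dots,B_k\}$ with $\diam(\beta)<\vep/2$ together with a thickening $\beta'=\{B_1',\dots,B_k'\}$ such that $\overline{B_i}\subseteq B_i'$, still $\diam(\beta')<\vep$, and such that for some $\eta>0$ the $\eta$-neighbourhood of every $\overline{B_i}$ lies in $B_i'$. By Step 1, $h(f_0)=\inf_n\frac1n\log|\beta^n_{f_0}|$, so there is $N$ with $\frac1N\log|\beta^N_{f_0}|<a$. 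By uniform continuity of $f_0,\dots,f_0^{N-1}$ on the compact space $M$, there is a $C^0$-neighbourhood $\cV$ of $f_0$ with $\sup_x d(f^jx,f_0^jx)<\eta$ for all $0\le j<N$ and all $f\in\cV$. For such $f$: if a family of words $(w_0,\dots,w_{N-1})$ indexes a minimal subcover of $\beta^N_{f_0}$, then each $x\in M$ lies in some $\bigcap_{j<N}f_0^{-j}B_{w_j}$, so $f_0^jx\in\overline{B_{w_j}}$ and hence $f^jx$ lies in the $\eta$-neighbourhood of $\overline{B_{w_j}}$, i.e.\ $x\in\bigcap_{j<N}f^{-j}B'_{w_j}$. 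Thus the same index set yields a subcover of $(\beta')^N_f$, so $|(\beta')^N_f|\le|\beta^N_{f_0}|$. Finally, for $f\in\cV\cap\cW$, Step 1 applies to $\beta'$ (it has mesh $<\vep$ and $f$ is $\vep$-entropy expansive), giving
$$
h(f)\ =\ h(f,\beta')\ =\ \inf_{n\ge1}\frac1n\log|(\beta')^n_f|\ \le\ \frac1N\log|(\beta')^N_f|\ \le\ \frac1N\log|\beta^N_{f_0}|\ <\ a.
$$
Since $a>h(f_0)$ was arbitrary, $h$ is upper semi-continuous at $f_0$, and $f_0\in\cW$ was arbitrary.

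\textbf{Main obstacle.} The genuinely delicate point is the vanishing of the correction term in Step 1, i.e.\ Bowen's theorem that entropy expansiveness at scale $\vep$ lets one read $h(f)$ off the single scale $\vep$; it is the step that really consumes the hypothesis, and it rests on a compactness comparison between the finite-time balls $B_m(z,\vep)$ and the infinite ones $B_\infty(z,\vep)$. Granting that (it is classical), the only real idea needed in Step 2 is the asymmetry between the cover $\beta$ used for $f_0$ and its thickening $\beta'$ used for nearby $f$: this asymmetry absorbs the error made by replacing $f_0$ by $f$ in finitely many coordinates and converts $C^0$-closeness into the clean combinatorial bound $|(\beta')^N_f|\le|\beta^N_{f_0}|$.
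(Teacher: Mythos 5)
Your proof is correct and follows essentially the same route as the paper's: Bowen's theorem reduces $h(f)$ to the single-scale quantity $h(f,\beta)$ for any cover $\beta$ of mesh $<\vep$, and then upper semi-continuity is read off the formula $h(f,\beta)=\inf_n\frac1n\log|\beta^n_f|$. The only cosmetic differences are that you re-derive Bowen's theorem (the paper simply cites it) and you manage the perturbation through a thickened cover $\beta'$, whereas the paper just observes directly that $f\mapsto|\beta^n_f|$ is upper semi-continuous for each fixed $\beta$ and $n$.
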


\begin{proof}
Bowen~\cite[Theorem~2.4]{Bow72b} asserts that $h(f)=r(M,\vep)$ if $f$ is
$\vep$-entropy expansive. Then, by Remark~\ref{equivalence of definitions of entropy},
we have $h(f)=h(f,\beta)$ for every $f\in\cW$ and every open covering $\beta$ of $M$ with
$\diam\beta<\vep$. Let $\beta$ be fixed. It is easy to see from the definition
\eqref{eq.hbetaf} that the map $f \mapsto h(f,\beta)$ is upper semi-continuous
(because it is an infimum of upper semi-continuous functions). This gives the
claim.
\end{proof}

%\begin{Prop}\label{good partition realizes entropy}
%If $f$ is $(\mu,\vep)$-entropy expansive for every $\mu\in \cM(f)$ then
%$h_{\mu}(f,\xi)=h_{\mu}(f)$ for every finite measurable partition $\xi$ with
%$\diam\xi<\vep$.
%\end{Prop}

Let $f$ be a homeomorphism and $\mu$ be any $f$-invariant probability measure.
Given $\vep>0$, we say that $f$ is \emph{$(\mu,\vep)$-entropy expansive} if
\begin{equation}\label{eq.almost}
h(f, B^{\pm}_{\infty}(x,\vep))=0 \quad\text{for $\mu$-almost every $x\in M$.}
\end{equation}
We say that $f$ is \emph{$\vep$-almost entropy expansive} if it is $(\mu,\vep)$-entropy expansive
for any invariant probability measure $\mu$. It is clear that $\vep$-entropy expansiveness implies
$\vep$-almost entropy expansiveness. The converse is important for our purposes:

\begin{Prop}\label{total implies entropy expansive}
If $f$ is $\vep$-almost entropy expansive then $f$ is $\vep$-entropy expansive.
\end{Prop}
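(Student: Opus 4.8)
The plan is to prove the contrapositive: assuming $f$ is \emph{not} $\vep$-entropy expansive, I will exhibit an $f$-invariant probability $\mu$ for which \eqref{eq.almost} fails. Since $f$ is a homeomorphism, the identity $\sup_x h(f,B_\infty(x,\vep))=\sup_x h(f,B^\pm_\infty(x,\vep))$ recalled above shows that failure of $\vep$-entropy expansiveness means $h(f,B^\pm_\infty(x_0,\vep))>0$ for some $x_0\in M$. Consider $\Gamma=\{(x,y)\in M\times M: d(f^jx,f^jy)\le\vep \text{ for all } j\in\ZZ\}$: it is compact and invariant under $F:=(f\times f)|_\Gamma$, and the first-coordinate projection $\pi:\Gamma\to M$ is a continuous surjection (the diagonal lies in $\Gamma$) with $\pi\circ F=f\circ\pi$. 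The essential elementary remark is that $\pi^{-1}(x)=\{x\}\times B^\pm_\infty(x,\vep)$, and that $F$ carries $\pi^{-1}(x)$ onto $\pi^{-1}(f(x))$ acting as $f$ on the second coordinate; consequently $(n,\delta)$-separated subsets of $\pi^{-1}(x)$ for $F$ correspond bijectively to $(n,\delta)$-separated subsets of $B^\pm_\infty(x,\vep)$ for $f$, so $h(F,\pi^{-1}(x))=h(f,B^\pm_\infty(x,\vep))$ for every $x$; in particular $h(F,\pi^{-1}(x_0))>0$.

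Next I would build an $F$-invariant measure $m$ on $\Gamma$ with positive entropy over its projection $\mu:=\pi_*m$ to $M$. Fix $\delta\in(0,\vep)$ with $s:=s(B^\pm_\infty(x_0,\vep),\delta)>0$ and choose, along a subsequence of integers $n$, $(n,\delta)$-separated sets $S_n\subset B^\pm_\infty(x_0,\vep)$ with $\tfrac1n\log\#S_n\to s$. Set $m_n=\tfrac1{\#S_n}\sum_{y\in S_n}\tfrac1n\sum_{j=0}^{n-1}\delta_{(f^jx_0,\,f^jy)}$, a probability on $\Gamma$, let $m$ be a weak-$*$ accumulation point (so $m$ is $F$-invariant), and note that $\mu=\pi_*m$ is an accumulation point of $\tfrac1n\sum_{j<n}\delta_{f^jx_0}$. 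The crucial point is that the entropy $h_m(F\,|\,\pi)$ of the extension $(\Gamma,F,m)\to(M,f,\mu)$ relative to $\pi$ is strictly positive. To see this, fix a finite partition $\eta$ of $\Gamma$ with $\diam\eta<\delta$ and $m$-null boundary. The set $\{x_0\}\times S_n$ is $(n,\delta)$-separated for $F$ and lies in the single fibre $\pi^{-1}(x_0)$, on which $\pi^{-1}(\cB)$ is trivial; hence $H_{u_n}(\eta^n\,|\,\pi^{-1}(\cB))=H_{u_n}(\eta^n)=\log\#S_n$, where $u_n$ is the uniform measure on $\{x_0\}\times S_n$. Feeding this into the conditional version of Misiurewicz's argument in the proof of the variational principle — which transfers the exponential rate $s$ from $\tfrac1n H_{u_n}(\eta^n\,|\,\pi^{-1}(\cB))$ to $\lim_q\tfrac1q H_m(\eta^q\,|\,\pi^{-1}(\cB))$ — gives $h_m(F\,|\,\pi)\ge s>0$.

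To conclude, I would apply Bowen's fibre-entropy inequality for the factor map $\pi$ at the level of invariant measures (equivalently, the Ledrappier--Walters relative variational principle): $h_m(F\,|\,\pi)\le\int_M h(F,\pi^{-1}(x))\,d\mu(x)=\int_M h(f,B^\pm_\infty(x,\vep))\,d\mu(x)$. Combined with the previous step, $\int_M h(f,B^\pm_\infty(x,\vep))\,d\mu(x)>0$, so the (Borel) function $x\mapsto h(f,B^\pm_\infty(x,\vep))$ is positive on a set of positive $\mu$-measure; thus \eqref{eq.almost} fails for $\mu$ and $f$ is not $\vep$-almost entropy expansive, proving the contrapositive. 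I expect the main obstacle to be the middle step: one must verify that the exponential multiplicity of the fibrewise $(n,\delta)$-separated sets $S_n$ is genuinely retained in the limit measure $m$ as entropy relative to $\pi$, rather than being absorbed into $h_\mu(f)$. This is exactly the conditional (relative) refinement of Misiurewicz's construction and requires some care with the semicontinuity of conditional entropy under the weak-$*$ limit; the remaining ingredients — Borel measurability of $x\mapsto h(f,B^\pm_\infty(x,\vep))$ and the relative form of Bowen's inequality — are standard.
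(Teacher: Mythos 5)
Your proposal is correct, but it follows a genuinely different route from the paper's. You pass to the compact relation space $\Gamma\subset M\times M$ of $\vep$-shadowing pairs, identify $\pi^{-1}(x)$ with $\{x\}\times B^\pm_\infty(x,\vep)$, build an $F$-invariant measure $m$ on $\Gamma$ with positive relative entropy $h_m(F\mid\pi)$ by a conditional version of Misiurewicz's variational-principle construction (exploiting that the separated sets $S_n$ all lie in one fiber, so conditioning on $\pi^{-1}\cB$ is free there), and then invoke the Ledrappier--Walters relative variational principle to conclude $\int h(f,B^\pm_\infty(x,\vep))\,d\mu(x)>0$, hence positivity on a $\mu$-positive set. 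The paper instead argues directly and elementarily, in the more general form of Proposition~\ref{from essential to real} (threshold $a\ge0$, your statement being $a=0$): assuming $r_{m_i}(B_\infty(x_0,\vep),\delta)>e^{a_1m_i}$, it takes empirical measures along the orbit of $x_0$, uses the hypothesis to find a large-$\mu$-measure set $\Lambda_n$ of points near which $B^\pm_\infty(\cdot,\vep)$ is $(n,\delta/4)$-spanned by fewer than $e^{a_2 n}$ points, and then applies Bowen's covering Lemma~\ref{l.Bowen} along a decomposition of $\{0,\dots,m_i\}$ into blocks (good times near $\Lambda_n$, bad times elsewhere) to bound $r_{m_i}(B_{m_i}(x_0,\vep),\delta)$ below $e^{a_1m_i}$, a contradiction. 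Your approach is conceptually tidy and modular, but it outsources the heavy lifting to two nontrivial results that you flag but do not verify: the measurability of $x\mapsto h(F,\pi^{-1}(x))$ and the Ledrappier--Walters inequality on the one hand, and the conditional refinement of Misiurewicz's argument on the other. The paper's proof is entirely self-contained, using only Bowen's covering lemma, and it is worth noting that it delivers the stronger threshold version $a\ge0$ for free (your argument also does, since $\int h\,d\mu>a$ forces $h>a$ on a positive-measure set, but the paper records this explicitly as Proposition~\ref{from essential to real}).
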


This follows from a stronger result, Proposition~\ref{from essential to real}, that we present
in the next section. The notion of almost entropy expansiveness extends to non-invertible maps,
with $B_{\infty}(x,\vep)$ instead of $B^{\pm}_{\infty}(x,\vep)$ in the definition \eqref{eq.almost}.
Proposition~\ref{from essential to real} remains true, with the same change in the hypothesis,
and so Proposition~\ref{total implies entropy expansive} also extends to the non-invertible case.

\subsection{Entropy expansiveness from almost entropy expansiveness}

Let $f$ be a homeomorphism. We denote
$B_n^{\pm}(x,\vep)=\{z\in M: d(f^j(x),f^j(z))\le\delta\text{ for } |j|<n\}$, for each $x\in M$ and $\vep>0$.
Proposition~\ref{total implies entropy expansive} is the particular case $a=0$ of

\begin{Prop}\label{from essential to real}
Given $a\ge 0$, if $h(f,B^{\pm}_\infty(x,\vep))\leq a$ for $\mu$-almost every $x\in M$
and every $f$-invariant probability $\mu$, then $h(f,B_\infty(x,\vep))\leq a$ for every $x\in M$.
\end{Prop}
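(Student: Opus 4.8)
The plan is to encode the ``local entropy'' function $x\mapsto h(f,B^{\pm}_{\infty}(x,\vep))$ by a bounded subadditive cocycle over $f$, and then argue by contradiction through an invariant measure. By Bowen's identity $\sup_x h(f,B_{\infty}(x,\vep))=\sup_x h(f,B^{\pm}_{\infty}(x,\vep))$ recalled above, it suffices to prove that $h(f,B^{\pm}_{\infty}(x,\vep))\le a$ for \emph{every} $x\in M$. Suppose not, say $h(f,B^{\pm}_{\infty}(x_0,\vep))>a$. For a finite open cover $\alpha$ of $M$ and a compact set $K\subset M$, let $N(\alpha^n,K)$ be the least cardinality of a subfamily of $\alpha^n$ covering $K$; by standard comparisons with spanning and separated counts (cf. Remark~\ref{equivalence of definitions of entropy}) one has $\limsup_n\frac1n\log N(\alpha^n,K)\le h(f,K)$ for every $\alpha$, with equality in the limit as $\diam\alpha\to0$. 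So I may fix one finite open cover $\alpha$, fine enough that $\limsup_n\frac1n\log N(\alpha^n,B^{\pm}_{\infty}(x_0,\vep))>a$ while still $\limsup_n\frac1n\log N(\alpha^n,B^{\pm}_{\infty}(x,\vep))\le h(f,B^{\pm}_{\infty}(x,\vep))$ for all $x$.

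Set $\Phi_n(x)=\log N(\alpha^n,B^{\pm}_{\infty}(x,\vep))$; then $0\le\Phi_n\le n\log\#\alpha$, and I would record three properties. \emph{(i) Subadditivity over $f$, with no loss of scale.} Since $f$ is a homeomorphism, $f^m(B^{\pm}_{\infty}(x,\vep))=B^{\pm}_{\infty}(f^m x,\vep)$, and $\alpha^{m+n}$ consists of the sets $A\cap f^{-m}B$ with $A\in\alpha^m$ and $B\in\alpha^n$; combining an optimal $\alpha^m$-subcover of $B^{\pm}_{\infty}(x,\vep)$ with an optimal $\alpha^n$-subcover of $B^{\pm}_{\infty}(f^m x,\vep)$ one covers $B^{\pm}_{\infty}(x,\vep)$, so $\Phi_{m+n}(x)\le\Phi_m(x)+\Phi_n(f^m x)$. \emph{(ii) Upper semi-continuity of each $\Phi_n$.} The map $y\mapsto B^{\pm}_{\infty}(y,\vep)$ is upper semi-continuous in the Kuratowski sense, $\limsup_{y\to x}B^{\pm}_{\infty}(y,\vep)\subseteq B^{\pm}_{\infty}(x,\vep)$ (by continuity of the iterates of $f$ and compactness of $M$); hence the open set obtained as the union of an optimal $\alpha^n$-subcover of the compact set $B^{\pm}_{\infty}(x,\vep)$ already contains $B^{\pm}_{\infty}(y,\vep)$ for all $y$ near $x$, so $\Phi_n(y)\le\Phi_n(x)$ there. \emph{(iii)} By the choice of $\alpha$, $\limsup_n\frac1n\Phi_n(x)\le h(f,B^{\pm}_{\infty}(x,\vep))$ for every $x$, while $b:=\limsup_n\frac1n\Phi_n(x_0)>a$.

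Now I would build an invariant measure and close the loop. Choose $L_j\to\infty$ with $\frac1{L_j}\Phi_{L_j}(x_0)\to b$, and let $\mu$ be a weak-$*$ accumulation point of $\nu_j=\frac1{L_j}\sum_{i=0}^{L_j-1}\delta_{f^i x_0}$, so that $\mu$ is $f$-invariant. Fixing $L_0$, decomposing $\{0,\dots,L_j-1\}$ into $L_0$ arithmetic progressions of step $L_0$ (plus a remainder of length $<L_0$), and bounding each progression sum from below by a single $\Phi$-value through subadditivity, one gets $\int\Phi_{L_0}\,d\nu_j\ge\frac{L_0}{L_j}\Phi_{L_j}(x_0)-\frac{2L_0^2\log\#\alpha}{L_j}$, hence $\liminf_j\int\Phi_{L_0}\,d\nu_j\ge L_0 b$; by upper semi-continuity of $\Phi_{L_0}$ (property (ii)) and $\nu_j\to\mu$, this gives $\int\Phi_{L_0}\,d\mu\ge L_0 b$. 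As $L_0$ is arbitrary and $L\mapsto\int\Phi_L\,d\mu$ is subadditive (by (i) and $f$-invariance of $\mu$), Fekete's lemma yields $g^{*}(\mu):=\lim_L\frac1L\int\Phi_L\,d\mu=\inf_L\frac1L\int\Phi_L\,d\mu\ge b>a$. On the other hand, by Kingman's subadditive ergodic theorem $\frac1n\Phi_n\to\bar\Phi$ $\mu$-almost everywhere with $\int\bar\Phi\,d\mu=g^{*}(\mu)$; and $\bar\Phi(x)=\limsup_n\frac1n\Phi_n(x)\le h(f,B^{\pm}_{\infty}(x,\vep))\le a$ for $\mu$-almost every $x$, the last inequality being precisely the hypothesis of the Proposition applied to the invariant measure $\mu$. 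Thus $g^{*}(\mu)\le a$, a contradiction. (In the non-invertible case one replaces $B^{\pm}_{\infty}$ by $B_{\infty}$ throughout; the inclusion $f(B_{\infty}(x,\vep))\subseteq B_{\infty}(f x,\vep)$ still gives the subadditivity of $(\Phi_n)$, and nothing else changes.)

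The genuinely delicate point, I expect, is the construction of the cocycle in the second step: one needs an expression for $h(f,B^{\pm}_{\infty}(x,\vep))$ that is \emph{simultaneously} exactly subadditive over $f$ — which is why one passes from $(n,\cdot)$-spanning (or separated) counts, whose concatenation costs a factor two in the scale at each step via Lemma~\ref{l.Bowen}, to the open-cover counts $N(\alpha^n,\cdot)$ — and upper semi-continuous in $x$ (which forces working with the closed sets $B^{\pm}_{\infty}$, whose dependence on $x$ is Kuratowski upper semi-continuous). The other step to watch is the bookkeeping that upgrades ``$\Phi_{L_j}(x_0)$ large along a single orbit'' to ``$\int\Phi_{L_0}\,d\mu$ large for every $L_0$''. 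Everything else — the comparisons between covers and spanning sets, Fekete's lemma, Kingman's theorem, and upper semi-continuity of integrals against weak-$*$ limits — is routine.
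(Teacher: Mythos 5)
Your proof is correct, and it takes a genuinely different route from the paper's. Both arguments begin the same way -- pass to an accumulation point $\mu$ of the orbit averages of the offending point $x_0$ -- but then diverge. The paper works directly with $(n,\delta)$-spanning counts and Bowen's concatenation Lemma~\ref{l.Bowen}, which degrades the scale ($\vep\mapsto 2\vep$ at each concatenation); this is managed by juggling the radii $\delta$, $\delta/2$, $\delta/4$ through the sets $\Gamma_n\subset\Lambda_n$ and by a rather delicate hand-built decomposition $0=t_0<\cdots<t_r=m_i$ of the time interval, so that the estimate $r_{m_i}(B_{m_i}(x_0,\vep),\delta)<e^{a_1m_i}$ comes out directly, contradicting \eqref{eq.rmi}. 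You instead replace the spanning count by the open-cover count $\Phi_n(x)=\log N(\alpha^n, B^\pm_\infty(x,\vep))$, which is \emph{exactly} subadditive over $f$ with no scale loss (using $f^m\bigl(B^\pm_\infty(x,\vep)\bigr)=B^\pm_\infty(f^mx,\vep)$, valid precisely because you reduced to two-sided balls via Bowen's supremum identity), and you observe that the Kuratowski upper semi-continuity of $x\mapsto B^\pm_\infty(x,\vep)$ makes each $\Phi_n$ upper semi-continuous. That places you squarely in the setting of a bounded subadditive cocycle, and Fekete plus Kingman (together with the standard fact $\limsup_j\int\Phi_{L_0}\,d\nu_j\le\int\Phi_{L_0}\,d\mu$ for an USC function) give the contradiction with no combinatorics at all. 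The trade-off: the paper's argument is elementary and self-contained (only Bowen's Lemma~\ref{l.Bowen} is invoked), and does not pass through the two-sided sup identity; yours is cleaner and more conceptual, but leans on Kingman's subadditive ergodic theorem and on the Bowen equality $\sup_x h(f,B_\infty(x,\vep))=\sup_x h(f,B^\pm_\infty(x,\vep))$, the latter being harmless here since the conclusion is a supremum statement. Both the cover-vs-spanning comparison giving $\limsup_n\frac1n\Phi_n\le h(f,\cdot)\le$ \ (your (iii)) and the orbit-average bookkeeping with the $\frac{2L_0^2\log\#\alpha}{L_j}$ error term are fine as written. Your version does extend to the non-invertible case in the same way the paper claims, using $f(B_\infty(x,\vep))\subseteq B_\infty(fx,\vep)$ for subadditivity.
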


\begin{proof}
Suppose that $h(f,B_\infty(x_0,\vep)) > a$ for some $x_0\in M$.
Fix constants $a_1$ and $a_2$ such that $h(f,B_\infty(x_0,\vep)) > a_1 > a_2 > a$.
Then, there exists $\delta>0$, arbitrarily small, and a subsequence $(m_i)_i\to\infty$
such that
\begin{equation}\label{eq.rmi}
r_{m_i}(B_\infty(x_0,\vep),\delta)> e^{a_1 m_i}
\quad\text{for every $i$.}
\end{equation}
Write $\mu_{m_i}= ({1}/{m_i}) \sum_{j=0}^{m_i-1}\delta_{f^j(x_0)}$.
By compactness, $(\mu_{m_i})_i$ may be taken to converge, in the weak$^*$ topology,
to some invariant measure $\mu$. For each $n\ge 1$, denote
$$
\Gamma_n=\{x\in M: r_m(B^{\pm}_\infty(x,\vep),\delta/4)< e^{a_2 m} \text{ for any } m\ge n\}.
$$
These sets form an increasing sequence and, as long as $\delta$ is sufficiently small,
the hypothesis implies that $\cup_n \Gamma_n$ has full $\mu$-measure.
So, we may choose an increasing sequence of compact sets $\Lambda_n\subset \Gamma_n$
such that $\mu(\cup_n \Lambda_n)=1$.
For each $n\ge 1$ and $y\in \Lambda_n$, let $E_n(y)$ be an $(n,\delta/4)$-spanning set
for $B^{\pm}_\infty(y,\vep)$ with $\# E_n(y)< e^{a_2 n}$. Then
$$
U_n(y)
  = \bigcup_{z\in E_n(y)} B_n(z,\delta/2)
$$
is a neighborhood of the compact set $B^{\pm}_\infty(y,\vep)$.
So, we may choose $N=N_n(y)$ and an open neighborhood $V_n(y)$ of $y\in\Lambda_n$
such that $B^{\pm}_{N}(u,\vep) \subset U_n(y)$ for every $u\in V_n(y)$.
Choose $y_1, \dots, y_s \in\Lambda_n$ such that the $V_n(y_i)$, $i=1, \dots, s$ cover the compact
set $\Lambda_n$. Then let $W_n=\bigcup_{1\leq i \leq s} V_n(y_i)$ and
$L(n)=\max\{n, N_n(y_1), \dots, N_n(y_s)\}$.
The fact that $W_n$ is an open neighborhood of $\Lambda_n$ ensures that
\begin{equation}\label{eq.Wn}
\lim_{i\to\infty}\mu_{m_i}(W_n)\geq \mu(W_n)\geq \mu(\Lambda_n).
\end{equation}
Consider the sequence of integers $0=t_0<t_1<\cdots<t_r=m_i$ defined as follows.
Let $j\ge 0$ and suppose that $t_0,\cdots, t_j$ have been defined. Then, take
$$
t_{j+1} = \left\{\begin{array}{ll}
t_j+n & \text{if $f^{t_j}(x_0)\in W_n$ and $L(n) \le t_j < m_i-L(n)$} \\
t_j+1 & \text{otherwise.}
\end{array}\right.
$$
Write $\{t_0,t_1,\cdots, t_r\}$ as a disjoint union $A \cup B$, where $t_j \in A$ if $f^{t_j}(x_0)\in W_n$
and $L(n) \le t_j < m_i-L(n)$ and $t_j\in B$ otherwise. For $t_j \in A$, choose $s_j\in\{1, \dots, s\}$
such that $f^{t_j}(x_0) \in V_n(y_{s_j})$. Then
$$
f^{t_j}(B_{m_i}(x_0,\vep)) \subset B_{L(n)}^{\pm}(f^{t_j}(x_0), \vep) \subset U_n(y_{s_j})
$$
and so $f^{t_j}(B_{m_i}(x_0,\vep))$ is $(n,\delta/2)$-spanned by $E(y_{s_j})$.
Fix any $\delta/2$-dense subset  $E_*$ of the ambient space $M$.
Then $f^{t_j}(B_{m_i}(x_0,\vep))$ is $(1,\delta/2)$-spanned by $E_*$ for any $t_j\in B$.
So, Lemma~\ref{l.Bowen} applies to give
$$
r_{m_i}(B_{m_i}(x_0,\vep),\delta)
\leq \prod_{t_j \in A}\# E(y_{s_j}) \cdot  (\# E_*)^{\# B}
 \leq e^{a_2 n \# A} \cdot \kappa^{\#B},
$$
where $\kappa=\#E_*$. The definitions also imply that $n\#A \le m_i$ and
$$
\#B
\leq \#\{0\leq j < m_i: f^j(x_0) \notin W_n\}+2L(n)
 = (1-\mu_{m_i}(W_n)) m_i + 2L(n).
$$
Replacing in the previous inequality, we find that
$$
\begin{aligned}
r_{m_i}(B_{m_i}(x_0,\vep),\delta)
& \leq e^{a_2 m_i}  \cdot \kappa^{(1-\mu_{m_i}(W_n))m_i+2L(n)}\\
& = \exp\Big(m_i\big(a_2 + (1-\mu_{m_i}(W_n))\log\kappa+\frac{2L(n)}{m_i}\log\kappa\big)\Big)
\end{aligned}
$$
Fix $n$ large enough so that $1 - \mu(\Lambda_n)<(a_1-a_2)/(2\log\kappa)$.
Then, using \eqref{eq.Wn}, take $m_i$ to be large enough so that $1 - \mu_{m_i}(W_n)$ and
$2L(n)/m_i$ are both smaller than $(a_1-a_2)/(2\log\kappa)$.
Then the previous inequality yields
%$r_{m_i}(B_{m_i}(x_0,\vep),\delta) < e^{a_1 m_i}$.
%Then (cf. Remark~\ref{monotonicity}),
$$
r_{m_i}(B_\infty(x_0,\vep),\delta)
\leq r_{m_i}(B_{m_i}(x_0,\vep),\delta)
< e^{a_1 m_i},
$$
contradicting \eqref{eq.rmi}. This contradiction completes the proof of the proposition.
\end{proof}

\section{Almost entropy expansiveness}

Here we prove that every diffeomorphism away from tangencies is robustly almost entropy expansive:

\begin{Thm}\label{t.entropyexpansive}
Every diffeomorphism away from tangencies admits a $C^1$ neighborhood $\cU$ and some constant
$\vep>0$ such that $h(g, B^{\pm}_{\infty}(x,\vep))=0$ for every $g\in \cU$, every $g$-invariant
probability $\mu$, and $\mu$-almost every $x\in M$.
\end{Thm}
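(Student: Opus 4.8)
The plan is to combine the $C^1$ structure theory of diffeomorphisms away from tangencies with an analysis of bi-infinite dynamical balls adapted to a dominated splitting. In outline I would prove: (i) there are a $C^1$ neighborhood $\cU$ of $f$, an integer $\ell$, and $\vep>0$ such that for every $g\in\cU$ and every ergodic $g$-invariant $\mu$ the tangent bundle over $\supp\mu$ admits an $\ell$-dominated splitting $E^s\oplus E^c_1\oplus\cdots\oplus E^c_k\oplus E^u$ in which $E^s$ is uniformly contracted, $E^u$ uniformly expanded, each $E^c_i$ is one-dimensional, and the Lyapunov exponent along each $E^c_i$ vanishes for every $g$-invariant measure; (ii) with this $\vep$, $B^\pm_\infty(x,\vep)$ is contained in the local center manifold $W^c_\loc(x)$ tangent to $E^c:=E^c_1\oplus\cdots\oplus E^c_k$; (iii) $h(g,B^\pm_\infty(x,\vep))=0$ for every $x\in\supp\mu$; and (iv) the case of an arbitrary invariant measure follows from the ergodic decomposition.

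Step (i) is the crux, and the only point where the hypothesis $f\notin\HTclos$ enters. One first invokes the theorem (due to Wen, combined with the ergodic closing lemma) that a diffeomorphism away from tangencies has, robustly, a dominated splitting over each homoclinic class, hence over the support of any invariant measure; since domination extends to a neighborhood of a compact invariant set, one may shrink $\vep$ so that $B^\pm_\infty(x,\vep)$ remains in a region where the local strong-stable, strong-unstable and center plaque families of all the sub-bundles are defined. That the center refines into one-dimensional pieces with identically vanishing exponents is established by a perturbation argument: a robustly nonzero center exponent, or a robust failure of one-dimensionality, would allow one to generate along a periodic orbit supplied by the closing lemma a weak periodic point whose homoclinic intersections can be rendered non-transverse by a further $C^1$ perturbation (Franks' lemma together with Ma\~n\'e--Bochi--Gourmelon-type realization of eigenvalues along tangency-free orbits), contradicting $g\notin\HTclos$ after a possible shrinking of $\cU$. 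Making all these constants uniform over $\cU$ and over all invariant measures at once is the principal difficulty.

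Steps (ii) and (iii) are then comparatively routine. For (ii), since $E^s\oplus E^c$ and $E^c\oplus E^u$ are dominated with $E^s$ uniformly contracted and $E^u$ uniformly expanded, the plaque-family/graph-transform machinery shows, for $\vep$ small, that a point staying $\vep$-close to the orbit of $x$ for all nonnegative times lies on the center-stable plaque $W^{cs}_\loc(x)$ and, symmetrically, one staying $\vep$-close for all nonpositive times lies on $W^{cu}_\loc(x)$; hence $B^\pm_\infty(x,\vep)\subset W^{cs}_\loc(x)\cap W^{cu}_\loc(x)=W^c_\loc(x)$. For (iii) I would induct on $k$: when $k=0$ the ball is a single point, and for $k\ge1$ the dominated splitting $E^c=E^c_1\oplus(E^c_2\oplus\cdots\oplus E^c_k)$ foliates $W^c_\loc$ by plaques tangent to $E^c_2\oplus\cdots\oplus E^c_k$, so covering $B^\pm_\infty(x,\vep)$ by products of dynamical balls along these plaques and along the one-dimensional $E^c_1$-transversals and estimating spanning numbers by the corresponding products (cf. Lemma~\ref{l.Bowen}) bounds $h(g,B^\pm_\infty(x,\vep))$ by the entropy of the plaque balls, which is $0$ by induction, plus the entropy of the one-dimensional $E^c_1$-balls. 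Along a one-dimensional $E^c_i$-curve the vanishing of the exponent for all measures gives, by subadditivity and compactness, a uniform estimate $\|Dg^n|_{E^c_i}\|\le e^{\eta n}$ for all large $n$ and every $\eta>0$; thus $g^n$ is $e^{2\eta n}$-Lipschitz along the relevant $\vep$-arc, which is therefore covered by at most $(2\vep/\delta)e^{2\eta n}+1$ sub-arcs of $(n,\delta)$-diameter $\le\delta$, so the growth rate of spanning numbers along each center piece is at most $2\eta$; letting $\eta\to0$ gives entropy $0$.

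Putting (i)--(iv) together yields $h(g,B^\pm_\infty(x,\vep))=0$ for every $g\in\cU$, every $g$-invariant $\mu$ and $\mu$-almost every $x$, which is the assertion. As indicated, I expect the serious obstacle to be entirely in step (i): the reduction to the center via plaque families, Bowen's counting estimates, and the uniform distortion bound are standard, the one subtlety being to propagate the distortion estimates from $\supp\mu$ to a genuine neighborhood so that they control the whole of $B^\pm_\infty(x,\vep)$ rather than just its points lying on the invariant set.
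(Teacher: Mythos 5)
Your overall strategy---dominated splitting over supports of invariant measures, collapse of $B^\pm_\infty(x,\vep)$ onto a local center plaque, low-dimensional center forces zero entropy---is the right one and is what the paper does. However, the structural claim at the heart of your step~(i) is false, and the falsity propagates: you do not actually have the hypotheses you use in steps~(ii) and~(iii).

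You claim that, for $f\notin\HTclos$, the support of every invariant measure admits a dominated splitting $E^s\oplus E^c_1\oplus\cdots\oplus E^c_k\oplus E^u$ in which $E^s$ is \emph{uniformly} contracted, $E^u$ \emph{uniformly} expanded, each $E^c_i$ is one-dimensional, and the Lyapunov exponent along each $E^c_i$ \emph{vanishes for every invariant measure}. None of these three extra features holds in general away from tangencies. The bundle $E^1$ (and $E^3$) need not be uniformly hyperbolic: being away from tangencies is far weaker than partial hyperbolicity, and the whole technical effort of the paper is to work with the \emph{nonuniform} estimates that Wen's theorem plus Ma\~n\'e's ergodic closing lemma and the Pliss lemma actually provide. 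Concretely, the correct statement (Proposition~\ref{p.dominatedsplitting} in the paper) is: there exist $\lambda_0>0$, $L_0$, and $\cU_0$ such that over $\supp\mu$ there is a dominated splitting $E^1\oplus E^2\oplus E^3$ with $\dim E^2\le 1$ and, for $\mu$-a.e.\ $x$, the Birkhoff averages of $\log\|Dg^{L_0}|E^1\|$ along the backward orbit and of $\log\|Dg^{-L_0}|E^3\|$ along the forward orbit are $\le -\lambda_0$. That is nonuniform contraction/expansion along $E^1$ and $E^3$ and at most \emph{one} one-dimensional weak direction, whose exponent is merely small, not zero (it lies in $[-\gamma_1,\gamma_1]$ in Wen's estimate). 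Your perturbative sketch (Franks' lemma plus eigenvalue realization to produce a tangency) is the right flavor of argument for bounding the \emph{number} of weak directions by one, but it cannot yield uniform extremal hyperbolicity or identically vanishing center exponents.

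This is exactly why the theorem is phrased for $\mu$-almost every $x$ rather than every $x$: because $E^1,E^3$ are only nonuniformly hyperbolic, the collapse of $B^\pm_\infty(x,\vep)$ onto the center plaque requires selecting Pliss hyperbolic times from the Birkhoff averages, and these exist only at generic points for the measure. Your step~(ii) as written assumes uniform contraction and so silently gets the collapse for \emph{all} $x\in\supp\mu$, which is more than is provable and more than is needed. Once the collapse is established, the rest simplifies dramatically relative to your step~(iii): since $\dim E^2\le1$, the set $B^\pm_\infty(x,\vep)$ sits in a curve of uniformly bounded length inside the fake foliation $\cF^2$, so $r_n(B^\pm_\infty(x,\vep),\beta)\lesssim n/\beta$ grows at most linearly and the entropy is zero immediately; no induction on center sub-bundles, no sub-exponential Lipschitz estimate, and no vanishing-exponent hypothesis is needed. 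In short: replace the (false) uniform/zero-exponent structure in step~(i) by the nonuniform Pliss-type structure, and the proof of the collapse must be carried out at $\mu$-regular points via hyperbolic times --- which is the nontrivial content the proposal is missing.
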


In view of Proposition~\ref{total implies entropy expansive}, this implies that every such
diffeomorphism is robustly entropy expansive, with locally uniform expansiveness constant:

\begin{Cor}\label{c.entropyexpansive}
Every diffeomorphism away from tangencies admits a $C^1$ neighborhood $\cU$ and some constant
$\vep>0$ such that every $g\in\cU$ is $\vep$-entropy expansive.
\end{Cor}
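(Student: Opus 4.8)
The plan is to derive Theorem~\ref{t.entropyexpansive}, and hence its Corollary~\ref{c.entropyexpansive}, from the structural description of diffeomorphisms away from tangencies furnished by Wen's and Crovisier's work: by the results of Wen (solving Liao's problem) and the dichotomy of Bonatti--Diaz--Pujals, a diffeomorphism $f\in\Diff(M)\setminus\HTclos$ admits, in a $C^1$ neighborhood $\cU$, a global dominated splitting over the chain recurrent set into subbundles $E_1\oplus\cdots\oplus E_\ell$ which is fine enough that the only bundles that may fail to be uniformly contracting or uniformly expanding are one-dimensional ``center'' bundles on which the derivative has no domination available to split it further. More precisely, I would first record that away from tangencies one has, uniformly on $\cU$, a decomposition $TM = E^s \oplus E^c_1 \oplus \cdots \oplus E^c_k \oplus E^u$ (over the relevant invariant set, with $E^s$ uniformly contracted, $E^u$ uniformly expanded, and each $E^c_i$ one-dimensional) together with the ``no tangency'' consequence that there is no periodic orbit with a nonhyperbolic behavior obstructing this. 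The point of pushing the splitting this far is that the central directions are \emph{lines}, so the nonhyperbolicity is as mild as possible.

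The heart of the argument is then the following: for a fixed $g\in\cU$, a fixed $g$-invariant probability $\mu$, and $\mu$-almost every $x$, the infinite bi-dynamical ball $B^{\pm}_\infty(x,\vep)$ has zero topological entropy. The strategy is to show that $B^{\pm}_\infty(x,\vep)$ is contained in (a bounded number of) Lyapunov plaques tangent to the center-stable and center-unstable subbundles through $x$, and that within such plaques the dynamics of $g$ has no entropy. For this I would invoke two ingredients. First, domination forces any point $y$ whose whole orbit stays $\vep$-close to that of $x$ to lie in the intersection of the local center-stable manifold through $x$ (because the $E^u$ direction is uniformly expanded, points off the center-stable plaque separate in forward time) and the local center-unstable manifold (symmetrically, using $\vep$-closeness in backward time); this is the classical ``local product structure from domination'' argument, valid once $\vep$ is small relative to the domination constants, which are uniform on $\cU$. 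Hence $B^{\pm}_\infty(x,\vep)$ sits inside a plaque tangent to $E^c_1\oplus\cdots\oplus E^c_k$, of dimension $k$. Second, on this central plaque the restricted dynamics is, along each one-dimensional $E^c_i$, an interval map whose derivative is controlled; by an Oseledets/Pliss-type argument at $\mu$-a.e.\ $x$ (the Lyapunov exponents along $E^c_i$ are defined and, crucially, the lack of domination to split $E^c_i$ does not create entropy because each factor is a line), the entropy of $g$ restricted to $B^{\pm}_\infty(x,\vep)$ is bounded by the sum of the positive central Lyapunov exponents which, being measured inside an invariant set where no expansion survives the dominated splitting, can be made to contribute nothing. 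Concretely I expect to use a Ruelle-type inequality on the plaque together with the fact that a one-dimensional cocycle has entropy zero on an $\vep$-ball once $\vep\to 0$, matching the definition $h(g,B^{\pm}_\infty(x,\vep))=\lim_{\vep\to0}$ but now keeping $\vep$ fixed and small.

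The main obstacle, and the step that requires the most care, is controlling the central dynamics: showing that the one-dimensionality of each $E^c_i$ together with $\mu$-genericity of $x$ really does force zero entropy on $B^{\pm}_\infty(x,\vep)$, uniformly in $\mu$. The subtlety is that a single orbit can have central Lyapunov exponents of either sign, and the dominated splitting only separates \emph{different} $E^c_i$'s from each other, not positive from negative behavior within one $E^c_i$; so one cannot simply appeal to partial hyperbolicity. The resolution I would pursue is to work measure-by-measure and use the ergodic decomposition, reducing to ergodic $\mu$, and then to use a Pliss/Liao selecting-time argument: if the integrated central exponent were such that entropy appeared, one could select a periodic-like orbit along which the lack of domination within $E^c_i$ would manifest as a homoclinic tangency (via Franks' lemma perturbations, staying in $\cU$), contradicting $g\in\Diff(M)\setminus\HTclos$ for $g$ in a possibly smaller neighborhood. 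This ``perturb to create a tangency if central entropy is positive'' mechanism is the genuinely dynamical input and the place where the hypothesis $\overline{\HT}$-avoidance is truly used; everything else is soft domination theory plus the abstract entropy-expansiveness lemmas. Once Theorem~\ref{t.entropyexpansive} is in hand, Corollary~\ref{c.entropyexpansive} is immediate from Proposition~\ref{total implies entropy expansive} applied to each $g\in\cU$, with the same $\vep$.
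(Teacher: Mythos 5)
Your skeleton is right in outline—reduce to ``almost'' entropy expansiveness via Proposition~\ref{total implies entropy expansive}, use domination and fake (locally invariant) foliations to trap $B^{\pm}_\infty(x,\vep)$ inside a central plaque through $x$, and argue that the plaque carries no entropy—but the central step, as you have set it up, contains a genuine gap. You posit a dominated splitting $E^s\oplus E^c_1\oplus\cdots\oplus E^c_k\oplus E^u$ over the support of every invariant measure with each $E^c_i$ one-dimensional, and then try to conclude that the $k$-dimensional center plaque containing $B^{\pm}_\infty(x,\vep)$ has zero entropy ``because each $E^c_i$ is a line.'' Neither half of this is available. First, the structure theorem for diffeomorphisms away from tangencies (Wen~\cite{Wen04}, Proposition~\ref{p.uniform on large periodic orbits} in the paper) does \emph{not} produce a splitting of the whole tangent bundle into one-dimensional blocks; it produces a three-bundle dominated splitting $E^{cs}\oplus E^c\oplus E^{cu}$ in which only the middle bundle $E^c$ is known to be $\le 1$-dimensional (because away from tangencies periodic points have at most one near-zero exponent, real and simple), while $E^{cs}$ and $E^{cu}$ may be multi-dimensional and need not split further. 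Second, even granting your hypothetical decomposition, domination between distinct $E^c_i$'s tells you nothing about the entropy of the restricted dynamics on a $k$-dimensional plaque tangent to $\bigoplus_i E^c_i$: dominated one-dimensional factors can jointly carry positive entropy, and a vague appeal to ``a Ruelle-type inequality on the plaque'' plus ``one-dimensional cocycles have zero entropy'' does not close this. Your fallback mechanism—perturb to create a tangency if central entropy appears—is plausible in spirit but is precisely the content you would need to prove, and it is not how the hypothesis is actually exploited.

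What makes the paper's argument work, and what your proposal misses, is that the center bundle in the relevant splitting has dimension at most one \emph{to begin with}: Proposition~\ref{p.dominatedsplitting} gives, over $\supp\mu$, an $L_0$-dominated splitting $E^1\oplus E^2\oplus E^3$ with $\dim E^2\le 1$, together with Pliss-averaged contraction of $E^1$ (backward) and of $E^3$ (forward) at $\mu$-a.e.\ $x$. The proof of Theorem~\ref{t.entropyexpansive} then uses the Pliss lemma and the fake foliations to show that for a.e.\ $x$ and every $y\in B^{\pm}_\infty(x,\vep)$ the $\cF^1$- and $\cF^3$-coordinates of $y$ coincide with those of $x$, forcing $g^j(B^{\pm}_\infty(x,\vep))\subset\cF^2_{g^j(x)}(g^j(x),r_1)$ for all $j$; since $\cF^2$-leaves are points or curves of uniformly bounded length, $r(B^{\pm}_\infty(x,\vep),\beta)=0$ trivially. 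No Ruelle inequality and no multi-factor central analysis are needed because the center plaque is at most a curve. The domination data from Wen's theorem lives at periodic orbits; the paper transports it to arbitrary ergodic measures via Ma\~n\'e's ergodic closing lemma (Proposition~\ref{p.ergodic closing lemma}) and Pliss's lemma, a step your outline also omits. Once Theorem~\ref{t.entropyexpansive} is established, the passage to Corollary~\ref{c.entropyexpansive} via Proposition~\ref{total implies entropy expansive} is exactly as you say.
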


\subsection{Preparatory remarks}

Let $\Lambda\subset M$ be a compact set invariant under $f$.
Let $T_\Lambda M=E^1\oplus \cdots \oplus E^k$ be a splitting of the tangent bundle over $\Lambda$
into $Df$-invariant subbundles (some of the $E^j$ may reduce to $\{0\}$).
Given an integer $L\ge 1$, the splitting is called \emph{$L$-dominated} if for every $i<j$,
every $x\in \Lambda$, and every pair of non-zero vectors $u\in E^i_x$ and $v\in E^j_x$, one has
$$
\frac{\|Df_x^L(u)\|}{\|u\|} < \frac{1}{2}\frac{\|Df_x^L(v)\|}{\|v\|}.
$$
%Existence of an $L$-dominated splitting is well known to be a robust phenomenon
%(see \cite[Appendix~B]{Beyond} and references therein).
%there is a $C^1$ neighborhood $\cV_f$ of $f$ and a neighborhood $V_\Lambda$ of $\Lambda$
%such that, for any $g\in\cV_f$, the tangent space over any compact  $g$-invariant compact set
%contained in $V_\Lambda$ admits an $L$-dominated splitting into invariant subbundles
%$E^1_g$, \dots, $E_g^k$ with $\dim E_g^j=\dim E^j$ for every $j$.

In the sequel we focus on the case of dominated splittings $T_\Lambda M = E^1\oplus E^2\oplus E^3$
into three subbundles. Write $E^{ij}=E^i\oplus E^j$ for $i\neq j$.
Given a foliation $\cF$ and a point $y$ in the domain, we denote by $\cF(y)$ the leaf through $y$
and by $\cF(y,\rho)$ the neighborhood of radius $\rho>0$ around $y$ inside the leaf.
Following Burns, Wilkinson~\cite{BW10} we avoid assuming dynamical coherence by using locally
invariant (``fake'') foliations, a construction that goes back to Hirsch, Pugh, Shub~\cite{HPS77}.
For any $L$-dominated splitting over any invariant set of a diffeomorphism in some small
neighborhood of $f$, the angles between the invariant subbundles are bounded from zero by a
constant that depends only on $L$. This simple observation allows us to get the
Hirsch, Pugh, Shub statement in a somewhat more global form:

\begin{Lem}\label{Fake foliations2}
For any $f\in\Diff(M)$, $L\ge 1$, and $\zeta>0$ there is a $C^1$ neighborhood $\cU_f$
of $f$ and real numbers $\rho > r_0 >0$ with the following properties.
For any $g\in\cU_f$ let $\Lambda_g$ be a $g$-invariant compact set such that the
tangent space over $\Lambda_g$ admits an  $L$-dominated splitting
$T_{\Lambda_g}M=E^1_g \oplus E^2_g \oplus E^3_g$.
Then, the neighborhood $B(x,\rho)$ of every $x\in \Lambda_g$ admits foliations
$\cF^1_{g,x}$, $\cF^2_{g,x}$, $\cF^3_{g,x}$, $\cF^{12}_{g,x}$, $\cF^{23}_{g,x}$
such that for every $y\in B(x,r_0)$ and $*\in \{1,2,3,12,23\}$:
\begin{enumerate}
\item
the leaf $\cF^*_{g,x}(y)$ is $C^1$ and $T_y\big(\cF^*_{g,x}(y)\big)$
lies in a cone of width $\zeta$ about $E_x^*$;
\item
$g(\cF_{g,x}^*(y,r_0))\subset\cF_{g,x}^*(g(y)) $
and $g^{-1}(\cF_{g,x}^*(y,r_0))\subset \cF_{g,x}^*(g^{-1}(y))$;
\item
$\cF_{g,x}^1$ and $\cF_{g,x}^2$ subfoliate $\cF_{g,x}^{12}$ and $\cF_{g,x}^2$ and
$\cF_{g,x}^3$ subfoliate $\cF_{g,x}^{23}$.
\end{enumerate}
\end{Lem}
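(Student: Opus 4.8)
The plan is to deduce Lemma~\ref{Fake foliations2} from the Hirsch--Pugh--Shub theory of locally invariant manifolds in the normally hyperbolic (more precisely, partially hyperbolic with ``fake'' invariant foliations) setting, following the construction in Burns, Wilkinson~\cite[Section~3]{BW10} but carried out uniformly over the neighborhood $\cU_f$ and over all admissible invariant sets $\Lambda_g$. The key preliminary observation, already flagged in the text, is that $L$-domination of a splitting $T_{\Lambda_g}M = E^1_g\oplus E^2_g\oplus E^3_g$ forces a uniform lower bound (depending only on $L$, the ambient metric, and an upper bound $\sup_{g\in\cU_f}\|Dg\|$) on the angles between the subbundles; this is what lets a single neighborhood $\cU_f$ and single radii $\rho>r_0>0$ work simultaneously for all $g$ and all $\Lambda_g$. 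So first I would fix $f$, $L$, $\zeta$, shrink to a $C^1$ neighborhood $\cU_f$ on which $\|Dg\|$ and $\|Dg^{-1}\|$ are uniformly bounded and on which any $L$-dominated splitting has angles bounded below, and record these uniform constants.

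Next I would set up the graph transform argument at a fixed but arbitrary point $x\in\Lambda_g$. Using the exponential map of $M$, pull the dynamics back to a uniform-size ball in $T_xM$, where one has approximately linear maps conjugating $g$ near the orbit of $x$ to the sequence of linearizations $Dg$ along that orbit; these linear maps respect the (uniformly transverse) decomposition $E^1\oplus E^2\oplus E^3$ with the domination inequalities. For each of the five index sets $*\in\{1,2,3,12,23\}$, the bundle $E^*$ is dominated (on the appropriate side) by its complement, so the standard overflowing/inflowing graph transform — applied in the space of uniformly Lipschitz sections over balls of radius $\rho$ in $E^*_x$, with the complement as the ``normal'' direction — has a unique fixed section, giving the leaf $\cF^*_{g,x}(y)$ through each $y$ near $x$; iterating the contraction in the $C^1$ topology (here the angle bound and a cone-invariance computation give the width-$\zeta$ cone control of item (1)). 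Local invariance, item (2), is immediate from the construction: the graph transform is built precisely so that $g$ maps an $r_0$-sized piece of a leaf into the corresponding leaf over $g(y)$, provided $r_0$ is chosen small enough relative to $\rho$ so that the image still lies in the domain $B(g(x),\rho)$; item (3), the subfoliation statement, follows by carrying out the construction hierarchically — first build $\cF^{12}$ and $\cF^{23}$, then build $\cF^1,\cF^2,\cF^3$ as invariant subfoliations inside them, which works because domination is transitive ($E^1$ is dominated by $E^2$ inside $E^{12}$, etc.).

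Finally I would check the required uniformity: because all the constants entering the graph transform (Lipschitz bounds, domination ratios, angle lower bounds, bounds on $\|Dg^{\pm1}\|$ and on the second-order terms of $g$ in exponential coordinates) depend only on $f$, $L$, $\zeta$ and the chosen neighborhood — and not on $g\in\cU_f$, on $\Lambda_g$, or on the point $x$ — the same $\rho>r_0>0$ serve throughout, which is exactly the ``global'' strengthening of Hirsch--Pugh--Shub being asserted. The main obstacle is precisely this uniformity bookkeeping: one must be careful that the linearization error in exponential coordinates is controlled uniformly on a ball of radius $\rho$ that does \emph{not} shrink with $g$ or $\Lambda_g$, and that the cone-width parameter $\zeta$ can be achieved by choosing $r_0$ small while keeping $\rho$ fixed — i.e. that the $C^1$-convergence of the graph transform iterates is at a rate governed only by the uniform domination constant. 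Verifying that the HPS construction is robust in this way, rather than producing any genuinely new dynamics, is the real content; the rest is a routine adaptation of~\cite[Section~3]{BW10} and~\cite{HPS77}.
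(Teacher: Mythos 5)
Your proposal is correct and follows essentially the same route as the paper, which itself gives no detailed proof but simply cites Burns--Wilkinson and Hirsch--Pugh--Shub and observes that $L$-domination forces a uniform lower bound on the angles between the subbundles (depending only on $L$ and the neighborhood $\cU_f$), which is precisely what makes the single $\cU_f$, $\rho$, $r_0$ work uniformly over all $g$, $\Lambda_g$, and $x$. Your elaboration of the graph transform in exponential coordinates, the hierarchical construction of $\cF^{12},\cF^{23}$ and then $\cF^1,\cF^2,\cF^3$, and the uniformity bookkeeping are exactly what the cited references supply.
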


%\begin{Rem}
%The fake foliations come from the existence of domination on the tangent bundles.
%In particular, the fake foliation $\cF^{13}_x$ may not exist.
%\end{Rem}

For simplicity, let us drop the reference to $g$ in the notations for the invariant
subbundles and foliations. Lemma~\ref{Fake foliations2} allows us to define product
structures on the $r$-neighborhood of every point $x\in\Lambda_g$, as follows.
For $y$, $z\in B(x,\rho)$, write
\begin{itemize}
\item $[y,z]_{1,2}=a$ if $z \in \cF_x^{12}(y)$ and $\cF_x^1(y)$ intersects $\cF_x^2(z)$
at $a\in B(x,\rho)$;
\item $[y,z]_{12,3}=a$ if $\cF_x^{12}(y)$ intersects $\cF_x^3(z)$ at $a\in B(x,\rho)$.
%\item $[y,z]_{2,3}=a$ if $z \in \cF_x^{23}(y)$ and $\cF_x^2(y)$ intersects $\cF_x^3(z)$
%at $a\in B(x,\rho)$;
%\item $[y,z]_{1,23}=a$ if $\cF_x^{1}(y)$ intersects $\cF_x^{23}(z)$ at $a\in B(x,\rho)$.
\end{itemize}
Analogously, one defines $[y,z]_{2,3}$ and $[y,z]_{1,23}$.
By transversality (Lemma~\ref{Fake foliations2}(1)), in each case the intersection
point $a$ is unique when it exists. Moreover, one can find $r_1 \in (0,r_0]$,
independent of $g$, $\Lambda_g$, and $x$, such that $[y,z]_*$ is well defined whenever
$y$ and $z$ belong to $B(x, r_1)$. Moreover, for any $y\in B(x,r_1)$ there are points
$y_*\in\cF^*_x(x)$, for each $*\in\{1,3,12,23\}$, such that
\begin{equation}\label{eq.yyy}
[y_{3},y_{12}]_{12,3} = y = [y_{23},y_{1}]_{1,23}.
\end{equation}
Part (1) of Lemma~\ref{Fake foliations2} ensures (for sufficiently small $\zeta$) that the
locally invariant foliations $\cF^*_x$ are transverse, with angles uniformly bounded from
below. Thus, there exists $l>0$, independent of $g$, $\Lambda_g$, and $x$, such that
\begin{equation}\label{eq.lipschitz}
y_* \in \cF^*_x(x,lr)\quad\text{for all $*\in\{1,3,12,23\}$ and}
\end{equation}
\begin{equation}\label{eq.center leaf}
\left\{\begin{array}{l}
y_1=x \Rightarrow y=y_{23}\in\cF^{23}_x(x,lr) \\
y_3=x \Rightarrow y=y_{12}\in\cF^{12}_x(x,lr)\end{array}
\right\} \Rightarrow y \in \cF^{12}_x(x,lr) \cap \cF^{23}_x(x,lr) = \cF^2_x(x,lr)
\end{equation}
for any $y \in B^{\pm}(x,r)$ with $lr < r_1$. Moreover, $y\in B_\infty(x,r)$ implies
\begin{equation}\label{eq.Infty ball}
(f^j(y))_* \in \cF^*_{f^j(x)}(f^j(x),lr) \quand f^j(y_*)=(f^j(y))_*
\end{equation}
for all $j\in \ZZ$ and $*\in\{1,3,12,23\}$ (by local invariance of the foliations).

The next proposition improves on a main result of Yang~\cite{yang}, see also
Crovisier~\cite{Cro2}, and is the key step for Theorem~\ref{t.entropyexpansive}.
The proof is given in Section~\ref{s.proofofdomination}.

\begin{Prop}\label{p.dominatedsplitting}
Let $f:M\to M$ be a diffeomorphism away from tangencies. Then there exist
$\lambda_0>0$, $L_0\ge 1$, and a $C^1$ neighborhood $\cU_0$ of $f$, such that,
given any $g\in\cU_0$, the support of any ergodic $g$-invariant measure $\mu$
admits an $L_0$-dominated splitting $T_{\supp\mu}M=E^1\oplus E^2 \oplus E^3$
with $\dim(E^2)\leq 1$ and, for $\mu$-almost every point $x$,
\begin{equation}\label{eq.asymptotic}
\begin{aligned}
\lim_{n\to\infty} & \frac{1}{n}\sum_{i=1}^{n}\log\|Dg^{L_0} \mid E^1_{g^{-iL_0}(x)}\|\le-\lambda_0
\quand\\
\lim_{n\to\infty} & \frac{1}{n}\sum_{i=1}^{n}\log\|Dg^{-L_0} \mid E^3_{g^{iL_0}(x)}\|\le-\lambda_0.
\end{aligned}
\end{equation}
\end{Prop}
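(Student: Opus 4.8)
The plan is to reduce the statement, which concerns the invariant measures of an arbitrary diffeomorphism near $f$, to a \emph{robust} statement about periodic orbits, and then to transport its conclusion to $\supp\mu$ by an approximation argument. Since $\HTclos$ is closed, we may start from a $C^1$-neighborhood $\cU_0$ of $f$ disjoint from $\HTclos$, so that every $g\in\cU_0$ is again away from tangencies; all constants below will be extracted from $\cU_0$, shrinking it finitely many times. Fix $g\in\cU_0$ and an ergodic $g$-invariant measure $\mu$. Ma\~n\'e's ergodic closing lemma~\cite{Man82} provides, for each $\vep>0$, a diffeomorphism $g_\vep$ that is $\vep$-$C^1$-close to $g$ (hence in $\cU_0$ once $\vep$ is small) and has a periodic orbit which $\vep$-shadows arbitrarily long pieces of the orbit of a $\mu$-generic point and whose derivative data along the orbit approximate those of $\mu$; in particular one may arrange that the empirical measures of these periodic orbits converge to $\mu$.

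The core of the argument is the following robust assertion, which is where the absence of tangencies is used: after shrinking $\cU_0$ there exist $L_0\ge1$ and $\lambda_0>0$ such that for every $h\in\cU_0$ and every periodic point $p$ of $h$ the orbit of $p$ carries an $L_0$-dominated $Dh$-invariant splitting $E^1\oplus E^2\oplus E^3$ with $\dim E^2\le1$, with $E^1$ contained in the stable and $E^3$ in the unstable bundle of $p$, and with
$$
\frac{1}{\per(p)}\sum_{j=0}^{\per(p)-1}\log\|Dh^{L_0}\mid E^1_{h^{-jL_0}(p)}\|\le-\lambda_0
\quand
\frac{1}{\per(p)}\sum_{j=0}^{\per(p)-1}\log\|Dh^{-L_0}\mid E^3_{h^{jL_0}(p)}\|\le-\lambda_0.
$$
I would prove this by contradiction. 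If it failed there would be $h_n\to f$ with periodic orbits along which, for each $n$, either no $n$-dominated splitting of the required shape exists, or the middle bundle has dimension $\ge2$, or the integrated rate along $E^1$ (or along $E^3$) fails to be bounded away from $0$. In the first two cases the cocycle-perturbation lemmas of Franks and of Bochi--Viana let one mix the relevant exponents, or create equal-modulus (complex) eigenvalues in the weak directions of the orbit, keeping the orbit periodic; in the third case a Franks perturbation along the orbit yields a periodic point with a stable (resp. unstable) eigenvalue of modulus arbitrarily close to $1$, i.e. an enlarged weak bundle. In every case Gourmelon's $C^1$-perturbation mechanism for producing homoclinic tangencies out of such weak periodic configurations then yields a diffeomorphism arbitrarily $C^1$-close to $f$ with a homoclinic tangency, contradicting $f\notin\HTclos$. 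This is precisely the point at which the present statement improves on, and is made uniform over a neighborhood rather than merely $C^1$-generic as in, the main result of Yang~\cite{yang} (see also Crovisier~\cite{Cro2}).

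Granting the periodic-orbit assertion, the conclusion for $\mu$ follows. Apply it to the orbits produced above for $g_\vep$: they carry $L_0$-dominated splittings $E^1\oplus E^2\oplus E^3$ with $\dim E^2\le1$; since their Lyapunov exponents approximate those of $\mu$ and the rate bounds force every exponent along $E^1$ (resp. $E^3$) to lie below $-\lambda_0/L_0$ (resp. above $\lambda_0/L_0$), the dimensions of the three pieces are eventually constant and $\mu$ has at most one Lyapunov exponent in the gap, which is exactly the direction that becomes $E^2$. As $L_0$-domination is a closed condition on the pair formed by a diffeomorphism and an invariant compact set, passing to the limit $\vep\to0$ yields an $L_0$-dominated splitting $T_{\supp\mu}M=E^1\oplus E^2\oplus E^3$ with $\dim E^2\le1$ for $g$ itself. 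The same limiting process turns the periodic rate bounds into $\int\log\|Dg^{L_0}\mid E^1\|\,d\nu\le-\lambda_0$ and $\int\log\|Dg^{-L_0}\mid E^3\|\,d\nu\le-\lambda_0$ for every $g$-invariant measure $\nu$ supported in $\supp\mu$; decomposing $\mu$ into its finitely many $g^{L_0}$-ergodic components and applying Birkhoff's ergodic theorem to $g^{-L_0}$ then gives \eqref{eq.asymptotic} for $\mu$-almost every $x$.

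The main obstacle is the robust periodic-orbit assertion of the second paragraph. Both of its nontrivial clauses, namely $\dim E^2\le1$ and the uniform lower bound $\lambda_0$ on the contraction and expansion rates of $E^1$ and $E^3$, genuinely require the hypothesis \emph{away from tangencies}, and the delicate part is the bookkeeping that converts weak or non-dominated periodic data into an actual homoclinic tangency while keeping every perturbation inside a \emph{fixed} $C^1$-neighborhood of $f$, so that the resulting constants $L_0$ and $\lambda_0$ are locally uniform rather than attached to individual generic maps.
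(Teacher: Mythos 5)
Your overall strategy---reducing to a uniform statement about periodic orbits via Ma\~n\'e's ergodic closing lemma, passing to Hausdorff/weak$^*$ limits to obtain the $L_0$-dominated splitting over $\supp\mu$, and then transferring the rate bounds to the measure---is precisely the paper's skeleton. But the two pivots of your argument contain genuine gaps.

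First, your ``robust periodic-orbit assertion'' is stronger than the result of Wen~\cite{Wen04} that the paper actually invokes (Proposition~\ref{p.uniform on large periodic orbits}), and the extra strength is exactly where the difficulty lies. Wen supplies the averaged rate bound only for periodic points of period $\tau(p,g)\ge L_1$; for short orbits, the quantity $\|Dg^{L_0}\mid E^1_p\|$ need not be $<1$ even when every eigenvalue of $Dg^{\tau(p,g)}\mid E^1_p$ has modulus $<e^{-\gamma_1}$ (think of a contracting matrix with a large nilpotent part), so a uniform $\lambda_0$ over all periodic points does not come for free. The paper handles this with a separate compactness argument on $\Per(f,L_1)$ (Lemma~\ref{l.claim4}): it extracts a uniform power $m$ that beats the non-normality, then takes $L_0$ a multiple of $m!\,L_1!$ so that the bound holds on the whole compact set $\Per(f,L_1)$, and finally uses upper semicontinuity of $g\mapsto\Per(g,L_1)$ to spread it over a $C^1$-neighborhood. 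Your contradiction sketch via Franks/Bochi--Viana/Gourmelon is, in essence, an outline of Wen's proof for long orbits; it neither addresses the short-period case nor supplies the uniform $\lambda_0$, and so the assertion as stated is not yet established.

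Second, the transfer from the periodic-orbit average bound to the almost-everywhere limit in \eqref{eq.asymptotic} is more delicate than ``integrate and apply Birkhoff.'' Passing to the limit along the ergodic closing lemma gives $\int\log\|Dg^{L_0}\mid E^1\|\,d\mu\le-\lambda_0$, a bound on the integral against $\mu$ itself. But $\mu$ need not be $g^{L_0}$-ergodic: it splits into $g^{L_0}$-ergodic components that are cyclically permuted by $g_*$, the function $\phi(x)=\log\|Dg^{L_0}\mid E^1_x\|$ is not $g$-invariant, and the Birkhoff limit in \eqref{eq.asymptotic} is a $g^{L_0}$-invariant function whose value on each component is the integral of $\phi$ against that component---which can exceed $-\lambda_0$ even when the average against $\mu$ does not. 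Your last sentence therefore needs the integral bound against \emph{each} $g^{L_0}$-ergodic component, and the limit argument, as given, supplies only the average. The paper sidesteps this entirely: Lemma~\ref{l.claim1} uses Pliss' lemma on the approximating periodic orbits to produce a compact set $\Lambda$ with $\mu(\Lambda)\ge\theta_0$ on which the $L_1$-rate bound holds uniformly for \emph{all} time scales, and then uses $g$-ergodicity (not $g^{L_0}$-ergodicity) to send $\mu$-a.e.\ $x$ into $\Lambda$, propagating the bound from the return time back to $x$ by explicit index bookkeeping. This is the mechanism that actually produces the pointwise conclusion \eqref{eq.asymptotic}.
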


\subsection{Proof of Theorem \ref{t.entropyexpansive}}

Let $\lambda_0$, $L_0$, and $\cU_0$ be as in Proposition \ref{p.dominatedsplitting}.
%for any $g\in\cU_0$ and any ergodic measure $\mu\in \cM(g)$, the support of $\mu$
%admits an $L_0$-dominated splitting $T_{\supp(\mu)}M=E^1\oplus E^2\oplus E^3$.
Fix $\delta>0$ with $2\delta<\lambda_0$ and then let $\zeta>0$ and $r_*>0$ be sufficiently
small so that, for any $g\in \cU_0$, we have
\begin{equation}\label{eq.distortion}
 e^{-\delta}
  \le \frac{\|Dg^{L_0}(x)u\|}{\|Dg^{L_0}(y)v\|}
  \le e^{\delta}
  \quand
  e^{-\delta}
  \le \frac{\|Dg^{-L_0}(x)u\|}{\|Dg^{- L_0}(y)v\|}
  \le e^{\delta}
\end{equation}
whenever $d(x,y)\le r_*$ and $\angle(u,v)\le \zeta$ (begin by choosing some local trivialization
of the tangent bundle). Let $\cU_f$, $r_1$, and $l$ be as in Lemma~\ref{Fake foliations2}
and the comments following it. Take $\cU=\cU_0 \cap \cU_f$ and $\vep=\min\{r_1/l,r_*/l\}$.
We are going to prove that the conclusion of Theorem~\ref{t.entropyexpansive}
holds for these choices.

By ergodic decomposition, it is no restriction to suppose that the measure $\mu$ is ergodic.
Given $x\in M$, denote $x_i=g^{iL_0}(x)$ for each $i\in\ZZ$. Let $\Gamma$ be the set of points
$x\in\supp\mu$ such that
$$
\begin{aligned}
\lim_{n\to\infty}\frac{1}{n}\sum_{i=1}^{n}\log\|Dg^{L_0} \mid E^1_{x_{-i}}\| \le -\lambda_0
\quand
\lim_{n\to\infty}\frac{1}{n}\sum_{i=1}^{n}\log\|Dg^{-L_0} \mid E^3_{x_i}\| \le -\lambda_0.
\end{aligned}
$$
Proposition \ref{p.dominatedsplitting} asserts that $\mu(\Gamma)=1$. Take $x\in\Gamma$ and
$y\in B(x,\vep)$, and then let $y_*\in\cF_x^*$, $*\in\{1,3,12,23\}$ be as in \eqref{eq.yyy}.
We claim that
\begin{equation}\label{eq.y1y3}
y_1 = x = y_3 \quad\text{for every } y\in B^{\pm}_\infty(x,\vep).
\end{equation}
If $E^3=\{0\}$ the leaf $\cF^3_x(x)$ reduces to $\{x\}$ and there is nothing to prove.
So, let us assume that $E^3$ is non-trivial.

\begin{Lem}[Pliss~\cite{Pli72}]\label{Pliss}
Given $a_* \le c_2 < c_1$ there exists $\theta=(c_1-c_2)/(c_1-a_*)$ such that,
given any real numbers $a_1, \cdots, a_{N}$ with
$$
\sum_{i=1}^N a_i\leq c_2 N \quand a_i\geq a_* \text{ for every $i$,}
$$
there exist $l>N\theta$ and $1 \le n_1<\cdots<n_l\leq N$ such that
$$
\sum_{i=n+1}^{n_j}a_i\leq c_1(n_j-n) \quad\text{for all}\quad 0\leq n<n_j \quand j=1,\cdots,l.
$$
\end{Lem}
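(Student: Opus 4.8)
The plan is to reformulate the statement in terms of partial sums and their running maxima, after which the conclusion is an elementary counting argument of the kind introduced by Pliss. Put $S_0=0$ and $S_k=a_1+\cdots+a_k$ for $1\le k\le N$, and set $T_k=c_1 k-S_k$, so that $T_0=0$. For $0\le n<k$ one has $\sum_{i=n+1}^{k}a_i=S_k-S_n$, so the inequality $\sum_{i=n+1}^{n_j}a_i\le c_1(n_j-n)$ is equivalent to $c_1 n_j-S_{n_j}\ge c_1 n-S_n$, i.e.\ to $T_{n_j}\ge T_n$. Hence the indices we must produce are exactly those $k\in\{1,\dots,N\}$ for which $T_k\ge T_m$ for every $0\le m<k$; call these the \emph{Pliss times} and let $\mathcal{P}$ denote their set. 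It remains to prove $\#\mathcal{P}>\theta N$.

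To bound $\#\mathcal{P}$ from below I would work with the running maximum $M_k=\max\{T_0,\dots,T_k\}$, which is non-decreasing, satisfies $M_0=0$, and increases only at Pliss times: indeed $M_k>M_{k-1}$ forces $T_k>T_m$ for all $m\le k-1$, hence $k\in\mathcal{P}$. Two estimates feed in. First, since $a_i\ge a_*$ for every $i$, each increment of $T$ satisfies $T_k-T_{k-1}=c_1-a_k\le c_1-a_*$; consequently, whenever $M_k>M_{k-1}$ one has $M_k-M_{k-1}=T_k-M_{k-1}\le T_k-T_{k-1}\le c_1-a_*$, using $M_{k-1}\ge T_{k-1}$. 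Second, from the hypothesis $S_N\le c_2 N$ one gets $M_N\ge T_N=c_1 N-S_N\ge(c_1-c_2)N$.

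Summing the jumps of $M_k$ (a telescoping sum, with nonzero terms only at the indices where $M$ increases, each such term bounded by $c_1-a_*$) then gives
\[
(c_1-c_2)N\ \le\ M_N\ =\ \sum_{k=1}^{N}\bigl(M_k-M_{k-1}\bigr)\ \le\ \#\mathcal{P}\cdot(c_1-a_*),
\]
so that $\#\mathcal{P}\ge\frac{c_1-c_2}{c_1-a_*}N=\theta N$, and the elements of $\mathcal{P}$, listed in increasing order, are the desired $n_1<\cdots<n_l$. Upgrading $\ge$ to the strict inequality in the statement is a matter of bookkeeping: equality throughout would force $a_i=a_*$ for all $i$ together with $S_N=c_2 N$, i.e.\ $a_*=c_2$, which is the degenerate boundary case (in the applications one has $a_*<c_2$); in any event only $\theta>0$ is used downstream.

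I do not expect a genuine obstacle here, as the lemma is elementary. The only points that require care are getting the direction of the reformulation $T_{n_j}\ge T_n$ right, carefully distinguishing the weak records (the set $\mathcal{P}$) from the strict records at which $M_k$ actually jumps when performing the telescoping estimate, and matching the constant $\theta$ precisely.
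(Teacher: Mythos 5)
The paper attributes this lemma to Pliss~\cite{Pli72} and cites it without proof, so there is no in-paper argument to compare against. Your proof is correct and is the standard Pliss counting argument: rewriting the target inequalities as $T_{n_j}\ge T_n$ for $T_k=c_1k-S_k$, identifying the admissible indices with the weak records $\mathcal{P}$ of $(T_k)$, and telescoping the running maximum $M_k$ --- each jump at most $c_1-a_*$, with $M_N\ge T_N\ge(c_1-c_2)N$ --- gives $\#\mathcal{P}\ge\theta N$. The reformulation $S_{n_j}-S_n\le c_1(n_j-n)\iff T_{n_j}\ge T_n$ is in the right direction, and the distinction between the set $\mathcal{P}$ of weak records and the (possibly smaller) set of strict records where $M$ actually jumps is handled correctly in the telescoping bound.

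The one loose step is the parenthetical upgrading $\ge$ to the strict $>$ in the statement. Equality in your chain does not force $a_i=a_*$ for all $i$; it only forces $a_i=a_*$ at strict-record indices and forces every weak record to be strict. A correct finish: under equality, each jump being exactly $c_1-a_*$ requires $M_{k-1}=T_{k-1}$, so the predecessor of any strict record $k\ge 2$ is a weak, hence strict, record; thus the strict records are exactly $\{1,\dots,\theta N\}$. But $M_N=T_N$ then makes $N$ itself a weak record, so $N\le\theta N$, forcing $\theta=1$, i.e.\ $a_*=c_2$. So the strict inequality $l>\theta N$ does hold whenever $a_*<c_2$, and genuinely fails on the allowed boundary $a_*=c_2$ (there $\theta=1$ and $l>N$ is impossible), so the lemma as stated is marginally too strong. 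In both of the paper's applications one has $a_*<c_2$ and, as you observe, only $\theta>0$ is used downstream, so this is harmless; but the bookkeeping remark as written would not survive a careful reading.
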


Take $a_*=\min\{\log\|Dg^{-L_0}(x)\|: g\in \cU \text{ and } x\in M\}$ and note that $a_*\le -\lambda_0$.
Let $-\lambda_0 < c_2 < c_1 = -\lambda_0+\delta$.
Applying Lemma~\ref{Pliss} to $a_i=\log\|Dg^{-L_0} \mid E^3_{x_{i}}\|$ and large values of $N$,
we find an infinite sequence  $1 \leq n_1 < n_2 < \cdots < n_j < \cdots$ such that
$$
\sum_{t=n+1}^{n_j}\log\|Dg^{-L_0} \mid E^3_{x_{i}}\|\leq (-\lambda_0+\delta)(n_j-n)
\quad\text{for every $0 \leq n < n_j$.}
$$
By Lemma \ref{Fake foliations2}, the relation \eqref{eq.distortion}, and our choice of $\vep$,
$$
e^{-\delta}
\le \frac{\|Dg^{-L_0}\mid T_{z}\cF_{x_i}^3(x_i)\|}
       {\|Dg^{-L_0}\mid T_{x}\cF_{x_i}^3(x_i)}\|
\le e^{\delta}
\quad\text{for every $z\in\cF^1_x(x_i,l\vep)$ and $i\in\ZZ$.}
$$
From these two relations one gets that
$$
g^{(n-n_j)L_0}(\cF^3_{x_{n_j}}(x_{n_j},l\vep))
\subset \cF^3_{x_{n}}(x_{n},e^{(n_j-n)(-\lambda_0+2\delta)}l\vep).
$$
for every $0 \le n < n_j$ and, in particular,
\begin{equation}\label{eq.contracts}
g^{-n_j L_0}(\cF^3_{x_{n_j}}(x_{n_j},l\vep))
\subset \cF^3_{x}(x,e^{n_j(-\lambda_0+2\delta)}l\vep).
\end{equation}
%%% FROM HERE
Let $y\in B^{\pm}_{\infty}(x,\vep)$. By \eqref{eq.Infty ball} and our choice of $\vep$,
the point $g^{iL_0}(y_3)=(g^{iL_0}(y))_3$ belongs to $\cF^3(x_i,l\vep)$ for every $i$.
In particular, $y_3$ belongs to the intersection of all $g^{-n_jL_0}(\cF^3_{x_{n_j}}(x_{n_j},l\vep))$
over all $j$. By \eqref{eq.contracts}, this intersection reduces to $\{x\}$.
So, $y_3=x$ as  claimed in \eqref{eq.y1y3}. The proof that $y_1=x$ is entirely analogous,
and so the proof of the claim is complete. Together with the relations \eqref{eq.center leaf}
and \eqref{eq.Infty ball}, this gives that
$$
g^j(B^{\pm}_\infty(x,\vep))\subset \cF^2_{g^j(x)}(g^j(x),r_1) \quad\text{for any $j\in \ZZ$.}
$$
Observe that the $\cF^2_{g^j(x)}(g^j(x),r_1)$ are curves length bounded by some uniform constant $C$
if $\dim E^2=1$, and they reduce to points if $\dim E^2=0$. In the first case one can easily see
that $r_n(B^{\pm}_{\infty}(x,\vep),\beta) \le Cn/\beta$ for every $n\ge 1$ and $\beta>0$,
whereas, in the second case $r_n(B^{\pm}_{\infty}(x,\vep),\beta)=1$. So, in either case,
$r(B^{\pm}_{\infty}(x,\vep), \beta)=0$ for every $\beta>0$.
In this way, we have reduced the proof of Theorem~\ref{t.entropyexpansive} to proving
Proposition~\ref{p.dominatedsplitting}.

\subsection{Proof of the main results}\label{ss.proofs}

We are in a position to deduce all our main results.
As mentioned before, Corollary~\ref{c.principal extension} follows from
Theorem~\ref{t.entropy expansiveness} and a result in~\cite{BFF02}.
Theorem~\ref{t.upper-semi-continuity of topological entropy} is a direct consequence of
Lemma~\ref{l.upper continuous topological entropy} and Corollary~\ref{c.entropyexpansive}.
Corollary~\ref{c.main} follows immediately from Theorem~\ref{t.entropy expansiveness},
as we also observed before. Theorem~\ref{t.entropy expansiveness} is a corollary of
Proposition~\ref{total implies entropy expansive} and Corollary~\ref{c.entropyexpansive}.
Finally, to prove Theorem~\ref{t.entropy conjecture} one can argue as follows.
Given any $f\in \Diff(M)\setminus \HTclos$, let $(f_n)_n$ be a sequence of $C^\infty$
diffeomorphisms converging to $f$ in the $C^1$ topology. We may assume that every $f_n$
belongs to the isotopy class of $f$, so that $\spec(f_n)=\spec(f)$. Then, by upper semi-continuity
of the topological entropy (Theorem~\ref{t.upper-semi-continuity of topological entropy})
and the main result in Yomdin~\cite{Yom87},
$$
h(f)
\geq \limsup_{n\to\infty}h(f_n)
\geq \limsup_{n\to\infty}\log\spec((f_n)_{*})
=\log\spec(f_{*}).
$$
Therefore, $f$ satisfies the entropy conjecture, as stated. This completes the proof.

Closing this section, we prove Remark~\ref{r.converse}.
If $\HTclos$ has empty interior (in the $C^1$ topology) then we may take
$\cR=\Diff(M)\setminus\HTclos$, and there is nothing  to prove.
From now on, assume that $\inter(\HTclos)$ is non-empty.
For each $k\ge 1$, define $\cR_k$ to be the set of diffeomorphisms which either are
away from tangencies, or admit a hyperbolic set of the form
\begin{equation}\label{eq.Lambda}
\Lambda \cup f(\Lambda) \cup \cdots \cup f^{m-1}(\Lambda)
\end{equation}
for some $m\ge 1$, with $f^m(\Lambda)=\Lambda$ and $\diam(f^j(\Lambda))< 1/k$
for every $j$. Since hyperbolic sets are stable under small perturbations of the
diffeomorphism, and the diameter remains essentially unchanged, $\cR_k$ is
a $C^1$ open set. Moreover, $\cR_k$ is $C^1$ dense in $\Diff(M)$. Indeed,
consider any $g\in\Diff(M)$. If $g$ is away from tangencies then, by definition, it belongs
to $\cR_k$. So, we may suppose that $g\in\HTclos$. It follows from homoclinic
bifurcation theory (see, for instance, \cite[Chapter~6]{PT93}) that, given any $\vep>0$,
there exist diffeomorphisms $f$ arbitrarily close to $g$ such that $f$ admits a hyperbolic
set of the form \eqref{eq.Lambda} with $\max_j\diam(f^j(\Lambda))<\vep$.
This proves that $\cR_k$ is indeed dense, for every $n$.
Then $\cR=\cap_k \cR_k$. $\cR$ is a $C^1$ generic subset.
One can easily verify that each diffeomorphism $f\in \cR\cap \HTclos$ has a
sequence of periodic horseshoes with periodic diameters converging to $0$.
This implies that $f$ is not entropy expansive, as claimed.

\section{Proof of Proposition~\ref{p.dominatedsplitting}}\label{s.proofofdomination}

Let $f:M\to M$ be any diffeomorphism away from tangencies. We denote by $\tau(p,f)$ the
smallest period of a periodic point $p$. The logarithms of the norms of eigenvalues of
$Df^{\tau(p,f)}(p)$ are called \emph{exponents} of $f$ at the periodic point $p$.

\begin{Prop}[Wen~\cite{Wen04}]\label{p.uniform on large periodic orbits}
There are constants $\lambda_1$, $\gamma_1>0$, $L_1\ge 1$, and a neighborhood $\cU_1$ of $f$
such that, for any periodic point $p$ of any diffeomorphism $g\in \cU_1$,
\begin{enumerate}
\item there is at most one exponent in $[-\gamma_1,\gamma_1]$; if such an exponent
does exist, the corresponding eigenvalue is real and of multiplicity $1$;

\item there is an $L_1$-dominated splitting $T_{\Orb(p,g)}M=E^{cs}\oplus E^c\oplus E^{cu}$ over the
orbit of $p$, where $E^{cs}$,  $E^c$, $E^{cu}$ correspond to the sums of the eigenspaces of
$Dg_p^{\tau(p,g)}$ whose exponents fall in $(-\infty, -\gamma_1)$ and
$[-\gamma_1, \gamma_1]$ and $(\gamma_1, +\infty)$.

\item if $\tau(p,g)\geq L_1$, then%\marginpar{time arrow}
 $$
 \begin{aligned}
\frac{1}{[\tau(p,g)/L_1]} & \sum_{i=0}^{[\tau(p,g)/L_1]-1}\log\|Dg^{L_1}\mid E^{cs}_{g^{iL_1}(p)}\|<-\lambda_1
\quand \\
\frac{1}{[\tau(p,g)/L_1]} & \sum_{i=0}^{[\tau(p,g)/L_1]-1}\log\|Dg^{-L_1}\mid E^{cu}_{g^{-iL_1}(p)}\|<-\lambda_1.
\end{aligned}
$$
\end{enumerate}
\end{Prop}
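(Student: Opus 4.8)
The plan is to derive this from the hypothesis that $f$ is away from tangencies, via a perturbation/contradiction argument of Ma\~n\'e--Wen--Franks type. First I would set up the dichotomy on exponents near zero. Suppose no uniform constants $\gamma_1, \lambda_1, L_1$ and neighborhood $\cU_1$ work; then for arbitrarily small $\gamma>0$ one finds $g$ arbitrarily close to $f$ with a periodic point $p$ having two exponents in $[-\gamma,\gamma]$ (or a complex/higher-multiplicity eigenvalue of modulus near $1$). The key tool here is Franks' lemma: along a periodic orbit one may realize any small perturbation of the derivative cocycle by a genuine $C^1$-small perturbation of $g$. So if there are two exponents within $2\gamma$ of each other near zero, a small rotation-type perturbation of $Dg^{\tau(p,g)}(p)$ inside the corresponding (at most $2$-dimensional, by taking $\gamma$ small relative to the spectral gaps elsewhere) invariant subspace makes those two eigenvalues equal in modulus and then turns the periodic point into one exhibiting a homoclinic tangency — either directly by creating a $2$-dimensional eigenspace with equal contraction/expansion and using the Palis--Takens bifurcation machinery, or by first making $p$ non-hyperbolic and invoking that non-hyperbolic periodic points of diffeomorphisms in $\HTclos^{c}$ cannot persist. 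Since $f\notin\HTclos$, for $g$ in a fixed small neighborhood this is impossible, giving item (1).

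For item (2), once item (1) guarantees a definite spectral gap separating exponents $<-\gamma_1$, the single exponent in $[-\gamma_1,\gamma_1]$ (if present), and exponents $>\gamma_1$, I would invoke the standard fact — again essentially due to Ma\~n\'e and Wen, using Franks' lemma to rule out arbitrarily long "non-dominated" stretches along periodic orbits — that a uniform spectral gap over all periodic orbits of all $g$ in a neighborhood forces a uniform $L_1$-domination of the corresponding splitting $E^{cs}\oplus E^c\oplus E^{cu}$ over $\Orb(p,g)$. The mechanism: if domination failed at scale $L$ for arbitrarily large $L$, one could use Franks' lemma to mix the two bundles along the orbit and produce an eigenvalue of modulus $1$ in the "wrong" bundle, or a tangency, contradicting the neighborhood being away from tangencies. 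This is the selection of $L_1$ and $\cU_1$; item (1) should be arranged to hold on the same (or a smaller) neighborhood.

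For item (3), the uniform quantitative contraction/expansion on $E^{cs}$ and $E^{cu}$ along sufficiently long periodic orbits, I would argue by contradiction once more: if the Birkhoff average of $\log\|Dg^{L_1}\mid E^{cs}\|$ over the orbit were not below a uniform $-\lambda_1$, then the smallest exponent would be close to $-\gamma_1$ on a "large proportion" of the orbit in the sense of Pliss, and a Franks perturbation supported on a long sub-orbit segment could push an exponent of $E^{cs}$ above $-\gamma_1$ while keeping $g$ close to $f$ — contradicting the already-established spectral gap (item (1)) for the perturbed diffeomorphism, which also lies in the neighborhood. The constant $\lambda_1$ is extracted from $\gamma_1$ and the modulus of continuity of $Dg$ on the neighborhood. \emph{The main obstacle} I anticipate is the first step: carefully executing the Franks-lemma perturbation so that two near-zero exponents genuinely yield a \emph{homoclinic} tangency associated with a hyperbolic periodic point (not merely a non-hyperbolic periodic point or a heterodimensional phenomenon), since that is precisely what the hypothesis $f\notin\HTclos$ forbids; this requires combining the linear perturbation with control of the invariant manifolds to force a non-transverse intersection of $W^s(p)$ and $W^u(p)$, which is the technical heart of Wen's argument.
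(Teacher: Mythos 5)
This proposition is attributed to Wen~\cite{Wen04} and the paper offers no proof of its own; it is imported as a black box. So there is no ``paper proof'' to compare against, and what follows is an assessment of whether your outline reconstructs Wen's argument correctly. Your overall template --- realize linear perturbations of the derivative cocycle along periodic orbits via Franks' lemma, and derive a contradiction with $f\notin\HTclos$ --- is indeed the Ma\~n\'e--Wen template. Two steps, however, do not hold up as written.

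For item~(1) you offer, as an alternative mechanism, ``invoking that non-hyperbolic periodic points of diffeomorphisms in $\Diff(M)\setminus\HTclos$ cannot persist.'' This is false as a statement: diffeomorphisms far from tangencies routinely carry non-hyperbolic periodic points with a single real eigenvalue of modulus one --- that is exactly the one-dimensional $E^c$ that item~(2) explicitly allows. What must be excluded is a \emph{pair} of nearly equal moduli near one (or a complex pair), because such a pair destroys any domination on the corresponding two-dimensional invariant subspace over the orbit, and it is this absence of domination that one converts into a tangency by a Franks perturbation plus control of local invariant manifolds. Your first branch (``directly create a tangency'') is the correct one; the second branch is not a valid shortcut and should be dropped.

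For item~(3) the contradiction does not close. You claim that if the Birkhoff average of $\log\|Dg^{L_1}\mid E^{cs}\|$ along the orbit is not below $-\lambda_1$, then ``the smallest exponent would be close to $-\gamma_1$,'' and pushing an $E^{cs}$-exponent above $-\gamma_1$ contradicts item~(1). Two problems. First, by submultiplicativity the Birkhoff average of $\log\|Dg^{L_1}\mid E^{cs}\|$ only bounds the top $E^{cs}$-exponent from \emph{above}; the exponents can all sit far below $-\gamma_1$ while the finite-block norms stay large because of internal rotation inside $E^{cs}$. Closing that gap --- turning norm growth into genuine eigenvalue growth by a small perturbation --- \emph{is} Ma\~n\'e's lemma, so your sketch implicitly assumes what it needs to prove. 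Second, even after such a perturbation, raising a single $E^{cs}$-eigenvalue into $[-\gamma_1,\gamma_1]$ does not contradict item~(1) when the original $E^c$ was trivial: the perturbed orbit then simply has one central exponent, which item~(1) permits. A correct deduction must either push an $E^{cs}$-eigenvalue past $\gamma_1$ (changing the index and producing a cycle/tangency inside the homoclinic class) or contradict the uniform $L_1$-domination of item~(2) rather than the exponent count of item~(1). As written, this step would fail.
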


Take $\lambda_1$, $\gamma_1$, $L_1$, and the neighborhood $\cU_1$ to be fixed once and for all.
Moreover, denote $K_1=\max\{|\log\|Dg^{m}(x)\|\,|: g\in \cU_1 \text{ and } x\in M \text{ and } |m|\le L_1\}$.
Let $g\in\cU_1$ and $\mu$ be any ergodic $g$-invariant probability measure.
We are going to use Ma\~n\'e's ergodic closing lemma:

\begin{Prop}[Ma\~n\'e~\cite{Man82}]\label{p.ergodic closing lemma}
Let $\mu$ be an ergodic measure of a diffeomorphism $g$.
Then there exist diffeomorphisms $g_n$, $n\ge 1$ and probability measures $\mu_n$, $n\ge 1$,
where each $\mu_n$ is $g_n$-invariant and supported on a periodic orbit $\Orb(p_n,g_n)$,
such that $(g_n)_n\to g$ in the $C^1$ topology and $(\mu_n)_n\to\mu$ in the weak$^*$ topology.
\end{Prop}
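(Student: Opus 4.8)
The plan is to reduce the statement to the assertion that $\mu$-almost every point is \emph{closable}, and then to extract the required sequences by a diagonal argument. First, if $\mu$ charges a point, that point is necessarily periodic — its forward orbit cannot be infinite, since finitely many disjoint atoms of equal mass already exhaust the total mass — and, by ergodicity, $\mu$ is the normalized counting measure on that periodic orbit; in this case one simply takes $g_n=g$ and $\mu_n=\mu$. So assume henceforth that $\mu$ is non-atomic. Fix a metric $D$ on the space of Borel probability measures on $M$ inducing the weak$^*$ topology, and fix a $\mu$-generic point $x$, meaning $\frac1N\sum_{j=0}^{N-1}\delta_{g^j(x)}\to\mu$; such points form a full-measure set (Birkhoff's theorem applied to a countable dense family of continuous functions), and we may take $x\in\supp\mu$. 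The construction I aim for is: for each $n$, a diffeomorphism $g_n$ within $1/n$ of $g$ in the $C^1$ topology, an integer $\pi_n$ with $\pi_n\to\infty$, and a $g_n$-periodic point $p_n$ of period $\pi_n$ with $d(g^j(x),g_n^j(p_n))<1/n$ for $0\le j<\pi_n$. Granting this, set $\mu_n=\frac1{\pi_n}\sum_{j=0}^{\pi_n-1}\delta_{g_n^j(p_n)}$: it is $g_n$-invariant and supported on $\Orb(p_n,g_n)$, its atoms lie within $1/n$ of those of the empirical measure $\frac1{\pi_n}\sum_{j=0}^{\pi_n-1}\delta_{g^j(x)}$, and — choosing $\pi_n$ large, by genericity of $x$ — that empirical measure is within $1/n$ of $\mu$ in $D$; hence $\mu_n\to\mu$ in the weak$^*$ topology, as required.

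To realize the construction for a fixed $n$, I would combine Poincar\'e recurrence, Birkhoff's theorem, and a $C^1$ closing argument along a thin tube. Poincar\'e recurrence furnishes arbitrarily large return times $m$ with $d(g^m(x),x)<\delta$ for any prescribed $\delta>0$, and genericity guarantees that for $m$ large the empirical measure of $x,g(x),\dots,g^{m-1}(x)$ is as close to $\mu$ as desired. The heart of the matter is the \emph{closing}: perturbing $g$ inside a thin tube $\bigcup_i B(x_{j_i},\rho_i)$ about the segment $x=x_0,x_1,\dots,x_m=g^m(x)$ (with $x_i=g^i(x)$) so that the perturbed map acquires a periodic point of period $m$ whose orbit $(1/n)$-shadows the segment. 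This rests on the $C^1$ closing-lemma perturbation techniques going back to Pugh: the difficulty is that realizing the closure with a perturbation of $C^1$-size below $1/n$ forces one to distribute it over many \emph{mutually disjoint} perturbation boxes along the tube, so one must know that the segment contains many points that are far apart at the relevant small scale $\rho$. This is exactly where non-atomicity enters: since $\sup_{y\in M}\mu(\overline{B(y,2\rho)})\to0$ as $\rho\to0$ (otherwise a limit of shrinking balls of bounded mass would produce an atom), a uniform-in-center estimate — via a finite $\rho$-net of centers and Birkhoff along the orbit of the generic point $x$ — shows that for $\rho$ small and $m$ large the segment revisits every $\rho$-ball with relative frequency below any prescribed $\kappa>0$, hence contains a $\rho$-separated subset of cardinality exceeding $1/\kappa$. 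Choosing the return depth $\delta$ (hence $m$) small enough that the box radius forced by the closing obeys these scale constraints, the closing perturbation can be carried out within the $(1/n)$-neighborhood of $g$.

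Running this for $n=1,2,\dots$ and, for each $n$, selecting the return time $\pi_n$ both large enough to support the closing and large enough that the empirical measure of the segment is within $1/n$ of $\mu$, produces the sequences $g_n$, $\pi_n$, $p_n$ of the first paragraph and completes the proof. The main obstacle is the closing step of the second paragraph: producing, with arbitrarily small $C^1$ size, a genuine periodic orbit that shadows a long recurrent segment. This is the technical core of Ma\~n\'e's argument in~\cite{Man82}, resting on the $C^1$ closing-lemma machinery of Pugh, with the device of spreading the perturbation over many well-separated boxes being essential; by contrast, the ergodic-theoretic selection of good segments for $\mu$-almost every point — recurrence, genericity, and the small-scale recurrence bound coming from non-atomicity — is the comparatively soft ingredient that assembles the argument.
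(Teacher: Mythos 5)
The paper does not prove this proposition: it is quoted from Ma\~n\'e~\cite{Man82} and used as a black box, so there is no internal argument to compare yours against. Your first and third paragraphs --- the reduction to the atomic case, the choice of a Birkhoff-generic point $x$, the insistence that the closed orbit have period $\pi_n\to\infty$ and shadow the \emph{entire} segment so that the periodic measures track the empirical measures of $x$ --- are the standard and correct derivation of the measure-approximation statement from Ma\~n\'e's pointwise theorem (that the set of well-closable points has full measure for every invariant measure). That reduction is sound.

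The gap is in your second paragraph, and you have correctly located the difficulty without closing it. Pugh's closing-lemma machinery produces a periodic point near a recurrent point; by itself it gives no control on the period of the created orbit nor on whether that orbit shadows the chosen segment $x, g(x),\dots,g^m(x)$: in general the closing returns an orbit of period $m'\ll m$ following only an uncontrolled sub-segment, whose empirical measure need not be anywhere near $\mu$. Upgrading ``periodic point nearby'' to ``periodic orbit of period comparable to $m$ that $(1/n)$-shadows the whole segment, for $\mu$-almost every $x$'' is precisely the content of the ergodic closing lemma, and it is why the conclusion holds only almost everywhere rather than for every recurrent point. The mechanism you offer --- non-atomicity forces the segment to contain many $\rho$-separated points, hence admits disjoint perturbation boxes --- addresses only the disjointness of the supports of the perturbations; it does not explain how the sub-segment to be closed is selected, nor how the shadowing estimate is propagated along the entire closed orbit. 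That selection is the measure-theoretic core of \cite{Man82} (a covering argument over the perturbation boxes, valid for atomic measures as well, so non-atomicity is not really the operative hypothesis), and your sketch defers exactly this step to the reference. As it stands your text is an outline plus a citation --- which is what the paper itself does, more briefly.
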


Of course, we may assume that $g_n\in\cU_1$ for all $n$.
Then, by Proposition~\ref{p.uniform on large periodic orbits}, the orbit of each $p_n$ admits an
$L_1$-dominated splitting $T_{\Orb(p_n,g_n)}M=E^1_n\oplus E^2_n\oplus E^3_n$ such that
$\dim(E^2_n)\leq 1$. Restricting to a subsequence if necessary, we may assume that the
dimensions of the subbundles  $E^i_n$ are independent of $n$. The fact that $(\mu_n)_n$
converges to  $\mu$ in the weak$^*$ topology implies that any Hausdorff limit of the sequence
$(\Orb(p_n,g_n))_n$ contains the support of $\mu$. It follows,
that the support admits an $L_1$-dominated splitting $T_{\supp\mu}M=E^1\oplus E^2\oplus E^3$
with $\dim(E^2)\leq 1$ (see remark at the end of page 288 in \cite{Beyond}).
This gives the first claim in Proposition~\ref{p.dominatedsplitting}.
For the proof of \eqref{eq.asymptotic} it is convenient to distinguish two cases.

\subsection{Measures with large support}\label{ss.large}

Take $\lambda_0\in(0,\lambda_1)$ and $\cU_0=\cU_1$ and $L_0$ to be an appropriately large
multiple of $L_1$ (to be chosen along the way). We are going to prove that \eqref{eq.asymptotic}
holds for every ergodic invariant probability measure $\mu$ whose support contains at least
$L_1$ points.  Let $(g_n)_n$ and $(\mu_n)_n$ be as in the ergodic closing lemma.
The assumption $\#\supp\mu\ge L_1$ implies that $\tau(p_n,g_n)\ge L_1$ for arbitrarily large $n$.
Then, restricting to a subsequence if necessary, we may assume that $\tau(p_n,g_n)\geq L_1$
for every $n$. Thus, we are in a position to use part (3) of
Proposition~\ref{p.uniform on large periodic orbits}.

\begin{Lem}\label{l.claim1}
There exists $\theta_0>0$, and for any $n\ge 1$ there exists $\Lambda_n\subset \Orb(p_n,g_n)$,
such that $\mu_n(\Lambda_n)\ge\theta_0$ and
$$
\frac{1}{k}\sum_{i=1}^{k}\log\|Dg^{-L_1}\mid E^{3,n}_{g_n^{iL_1}(q)}\|\le-\lambda_0
\quad\text{for every $q\in \Lambda_n$ and $k\ge 1$.}
$$
\end{Lem}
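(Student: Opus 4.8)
The plan is to derive Lemma~\ref{l.claim1} from part (3) of Proposition~\ref{p.uniform on large periodic orbits} together with the Pliss lemma (Lemma~\ref{Pliss}); the only genuine point is to upgrade the \emph{partial} average furnished by Proposition~\ref{p.uniform on large periodic orbits}(3) to an honest per-period average. Fix $n$ and write $O_n=\Orb(p_n,g_n)$, $\tau_n=\tau(p_n,g_n)\ge L_1$, $\kappa_n=[\tau_n/L_1]$, and abbreviate
$$
\varphi(x)=\log\|Dg_n^{-L_1}\mid E^{3,n}_x\|,\qquad x\in O_n.
$$
Note $|\varphi|\le K_1$ on $O_n$. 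Since $\mu_n$ is the normalized counting measure on $O_n$, it is enough to produce $\Lambda_n\subset O_n$ with $\#\Lambda_n\ge\theta_0\,\tau_n$ satisfying the stated inequality. The diffeomorphism $g_n^{L_1}$ permutes $O_n$, so $O_n$ splits into $g_n^{L_1}$-cyclic orbits $C_1,\dots,C_{d_n}$, each of cardinality $T_n=\tau_n/d_n\ (\ge\kappa_n)$, where $d_n=\gcd(\tau_n,L_1)$. Apply Proposition~\ref{p.uniform on large periodic orbits}(3) with every $q\in C_m$ as base point: $\sum_{i=0}^{\kappa_n-1}\varphi(g_n^{-iL_1}(q))<-\lambda_1\kappa_n$. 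Summing over $q\in C_m$ and using that, for each fixed $i$, the map $q\mapsto g_n^{-iL_1}(q)$ is a bijection of $C_m$, the left-hand side becomes $\kappa_n\sum_{x\in C_m}\varphi(x)$, so that
$$
\frac1{T_n}\sum_{x\in C_m}\varphi(x)<-\lambda_1\qquad\text{for every }m=1,\dots,d_n.
$$

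Now fix $m$ and a base point $q_0\in C_m$, put $a_*=-K_1$ (so $\varphi\ge a_*$ and, by the last display, $a_*\le-\lambda_1$), $c_2=-\lambda_1$, $c_1=-\lambda_0$; these satisfy $a_*\le c_2<c_1$ since $\lambda_0<\lambda_1$. For each large $M$, apply Lemma~\ref{Pliss} to the finite sequence $a_i=\varphi\big(g_n^{(i-1)L_1}(q_0)\big)$, $1\le i\le MT_n$, whose partial sum is $M\sum_{x\in C_m}\varphi(x)<-\lambda_1 MT_n=c_2\,(MT_n)$; this yields more than $\theta_0 MT_n$ Pliss times, with
$$
\theta_0=\frac{c_1-c_2}{c_1-a_*}=\frac{\lambda_1-\lambda_0}{K_1-\lambda_0}>0
$$
independent of $n$, $m$ and $M$. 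Since the sequence $(a_i)_i$ is $T_n$-periodic, letting $M\to\infty$ and projecting the Pliss times to $\ZZ/T_n\ZZ$ produces at least $\theta_0 T_n$ points of $C_m$ that occur as Pliss times arbitrarily deep in the stretch; by periodicity, at each such point all backward $L_1$-stepped Birkhoff averages of $\varphi$ are $\le-\lambda_0$. Running the same argument for the time-reversed sequence converts ``backward'' into ``forward'' and so produces $\Lambda_n\cap C_m\subset C_m$ with $\#(\Lambda_n\cap C_m)\ge\theta_0 T_n$ and
$$
\frac1k\sum_{i=1}^k\varphi\big(g_n^{iL_1}(q)\big)\le-\lambda_0\qquad\text{for all }q\in\Lambda_n\cap C_m\text{ and }k\ge1.
$$

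Set $\Lambda_n=\bigcup_{m=1}^{d_n}(\Lambda_n\cap C_m)$. As the $C_m$ partition $O_n$ and each contributes relative density at least $\theta_0$, we get $\mu_n(\Lambda_n)=\#\Lambda_n/\tau_n\ge\theta_0$, with $\theta_0$ independent of $n$, and the displayed inequality holds for all $q\in\Lambda_n$, $k\ge1$ --- this is Lemma~\ref{l.claim1}. The delicate point is precisely the first paragraph: because $L_1$ need not divide $\tau_n$, Proposition~\ref{p.uniform on large periodic orbits}(3) only controls the average of $\varphi$ over a truncated, $\kappa_n$-term, backward-stepped sub-sequence of $O_n$, whereas the Pliss machinery (and the use of periodicity to reach \emph{all} $k\ge1$) needs the true per-period average; the phase-averaging identity bridges this, and once it is available the remaining direction-of-time bookkeeping is routine.
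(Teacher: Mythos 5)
Your proof is correct, and it reaches the same statement by a noticeably cleaner route than the paper's.  Both arguments rest on Pliss's lemma (Lemma~\ref{Pliss}) applied to the per-$L_1$-step backward contraction rates along $\Orb(p_n,g_n)$, combined with periodicity of that sequence to upgrade finitely many Pliss bounds to all $k\ge 1$.  The differences are in how the Pliss hypothesis is produced and how the density is counted.  The paper applies Pliss once to a single long segment of length $N=\kappa\,\tau(p_n,g_n)$ (also chosen a multiple of $\kappa_n=[\tau_n/L_1]$), deriving the hypothesis by (implicitly) invoking Proposition~\ref{p.uniform on large periodic orbits}(3) at every base point of the orbit so that each consecutive block of $\kappa_n$ terms has average below $-\lambda_1$, and then counting $\Lambda_n$ with a multiplicity factor $N/\tau(p_n,g_n^{L_1})$ that costs a factor $1/(2L_1)$ in the final constant $\theta_0$.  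You instead split $O_n$ into the $d_n=\gcd(\tau_n,L_1)$ cycles of $g_n^{L_1}$, sum Proposition~\ref{p.uniform on large periodic orbits}(3) over all phases in a cycle and exploit bijectivity to obtain the exact per-period average $\tfrac1{T_n}\sum_{x\in C_m}\varphi(x)<-\lambda_1$, then apply Pliss to each $T_n$-periodic cycle separately and project Pliss times to $\ZZ/T_n\ZZ$, avoiding the multiplicity loss and giving the sharper $\theta_0=(\lambda_1-\lambda_0)/(K_1-\lambda_0)$.  You also make explicit two points the paper treats tersely: that Proposition~\ref{p.uniform on large periodic orbits}(3) must be applied at all orbit points (not just $p_n$) to control a full period, and that Pliss's lemma as stated controls backward-looking partial sums so one must run it on the reversed sequence to get the forward averages required by the Lemma; both are routine but worth stating.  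The only place you are slightly informal is ``letting $M\to\infty$ \ldots produces at least $\theta_0 T_n$ points'': to be precise, the number of admissible residues is $>(\theta_0-1/M)T_n$ for every $M$, and since the set of good points in $C_m$ is a fixed finite set, this forces its cardinality to be $\ge\theta_0 T_n$; alternatively, one may simply settle for any $\theta_0'<\theta_0$, which is all the Lemma needs.
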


\begin{proof}
We are going to apply Lemma~\ref{Pliss} to $a_i=\log\|Dg^{-L_1}\mid E^{3,n}_{g^{(i-1)L_1}(p_n)}\|$
for $i=1, \dots, N$, where $N\ge 1$ is some large integer (precise conditions are stated along
the way). Take $a_*=-K_1$ and $c_2=-\lambda_1$ and $c_1=-\lambda_0$ and
$\theta=(\lambda_1-\lambda_0)/(K_1-\lambda_0)$.
The assumption of the lemma is a direct consequence of part (3) of
Proposition~\ref{p.uniform on large periodic orbits}, as long as we choose $N$
to be a multiple $[\tau(p_n,g)/L_1]$.
The conclusion of the lemma yields $1 \le n_1 < \cdots < n_l \le N$ with $l>\theta N$ such that,
for every $j=1, \dots, l$,
$$
\sum_{i=m}^{n_j-1} \log\|Dg^{-L_1}\mid E^{3,n}_{g_n^{-iL_1}(q)}\|\le -(n_j-m)\lambda_0
\quad\text{for all } 0 \le m < n_j.
$$
Denoting $q_{n,j}=g^{-n_jL_1}(p_n)$, this may be rewritten as
\begin{equation}\label{eq.rewritten}
\sum_{i=1}^{k} \log\|Dg^{-L_1} \mid E^{3,n}_{g_n^{iL_1}(q_{n,j})}\|\le -k\lambda_0
\quad\text{for all } 1 \le k \le n_j.
\end{equation}
Assume that $n_j\ge\tau(p_n,g)$. Observing that $g^{\tau(p_n,g)L_1}(q_{n,j})=q_{n,j}$,
one easily deduces that the inequality \eqref{eq.rewritten} holds for every $1 \le k < \infty$.
This means that the conclusion of the lemma holds for every point $q$ in
$$
\Lambda_n = \{g^{-n_jL_1}(p_n): \tau(p_n,g) \le n_j < N\}.
$$
Observe that $\#\{j: \tau(p_n,g) \le n_j < N\} > \theta N - \tau(p_n,g)$,
but different values of $n_j$ may yield the same point in $\Lambda_n$.
Take $N$ to be some large multiple $\kappa\tau(p_n,g)$ of the period.
Then $N$ is also a multiple of the smallest period
$\tau(p_n,g^{L_1})=\tau(p_n,g)/\gcd(L_1,\tau(p_n,g))$, of $p_n$ relative to
the iterate $g^{L_1}$. Hence,
$$
\begin{aligned}
\#\Lambda_n
& \ge \frac{\theta N - \tau(p_n,g)}{N/\tau(p_n,g^{L_1})}
= \frac{\theta\kappa-1}{\kappa\gcd(L_1,\tau(p_n,g))}\,\tau(p_n,g) \\
& \ge \frac{\theta\kappa-1}{\kappa L_1}\,\tau(p_n,g)
\ge \frac{\theta}{2L_1}\,\tau(p_n,g),
\end{aligned}
$$
as long as $\kappa$ is large enough. Then
$\mu_n(\Lambda_n) = {\#\Lambda_n}/{\tau(p_n,g)} \ge \theta/(2L_1)$.
The proof of the lemma is complete.
\end{proof}

Let us proceed with the proof of \eqref{eq.asymptotic} in the case $\supp\mu\ge L_1$.
Restricting to a subsequence if necessary, we may assume that $(\Lambda_n)_n$ converges
to some compact set $\Lambda$ in the Hausdorff topology.
Since $(\mu_n)_n$ converges to $\mu$ in the weak$^*$ topology, we have that
$\mu(\Lambda)\ge\theta_0$. Moreover,
\begin{equation}\label{eq.bound0}
\frac 1k \sum_{i=1}^{k}\log\|Dg^{-L_1}\mid E^{3}_{g^{iL_1}(y)}\|\le -\lambda_0
\quad\text{for every } k\ge 1 \text{ and } y\in\Lambda.
\end{equation}
By ergodicity, for $\mu$-almost every $x$, there exists $n(x)\ge 1$ such
that $g^{n(x)}(x)\in \Lambda$.
Take $L_0=\kappa L_1$ for some large $\kappa\ge 1$ and denote $j_0=[n(x)/L_0]$. Clearly
\begin{equation}\label{eq.bound1}
\sum_{j=1}^{j_0} \log\|Dg^{-L_0}\mid E^3_{g^{jL_0}(x)}\| \le j_0 \kappa K_1.
\end{equation}
Let $j_1=[(n(x)-j_0L_0)/L_1]$ and $l_1=n(x)-j_0L_0-j_1L_1$.
By construction, $j_1\in[0,\kappa)$ and $l_1\in[0,L_1)$. Let us write
$g^{-L_0} = g^{-l_1} \circ \big(g^{-L_1}\big)^{\kappa} \circ g^{l_1}$.
Then, for every $j > j_0$, the expression $\log\|Dg^{-L_0}\mid E^3_{g^{jL_0}(x)}\|$
is bounded by
\begin{equation}\label{eq.bound2}
\begin{aligned}
 \sum_{i=1}^{\kappa} \log\| & Dg^{-L_1} \mid E^3_{g^{(j-1)L_0+l_1+iL_1}(x)}\| + 2 K_1\\
%& = \sum_{i=1}^{\kappa} \log\| Dg^{-L_1} \mid E^3_{g^{(j-1-j_0)L_0+(i-j_1)L_1}(y)}\| + 2 K_1 \\
& = \sum_{i=(j-1-j_0)\kappa+(1-j_1)}^{i=(j-1-j_0)\kappa+(\kappa-j_1)}\log\|Dg^{-L_1}\mid E^3_{g^{i L_1}(y)}\|
+ 2 K_1,
\end{aligned}
\end{equation}
where $y=g^{n(x)}(x)$. Adding \eqref{eq.bound1} to the sum of \eqref{eq.bound2} over
$j=j_0+1, \dots, n$, we find that $\sum_{j=1}^n\log\|Dg^{-L_2} \mid E^3_{g^{jL_2}(x)}\|$
is bounded by
$$
\begin{aligned}
j_0 \kappa K_1 + & \sum_{i=(1-j_1)}^{(n-j_0)\kappa+(\kappa-j_1)}\log\|Dg^{-L_1}\mid E^3_{g^{iL_1}(y)}\|
+ 2 K_1 n \\
& \leq  \big(j_0 \kappa + j_1) K_1
+ \sum_{i=1}^{(n-j_0)\kappa-j_1}\log\|Dg^{-L_1}\mid E^3_{g^{iL_1}(y)}\|
+ 2 K_1n.
\end{aligned}
$$
Consequently,
$$
\begin{aligned}
\limsup_{n\to\infty} \frac{1}{n} \sum_{j=1}^{n} \log\| & Dg^{-L_2}\mid E^3_{g^{jL_2}(x)}\| \\
%& \le \limsup_{n\to\infty} \frac{1}{n}\sum_{i=1}^{(n-j_0)\kappa+(\kappa-j_1)}\log\|Dg^{-L_1}\mid E^3_{g^{iL_1}(y)}\|
%+ 2 K_1 \\
&  \leq \kappa \limsup_{k\to\infty} \frac{1}{k}\sum_{i=1}^{k}\log\|Dg^{-L_1}\mid E^3_{g^{iL_1}(y)}\|
+ 2 K_1.
\end{aligned}
$$
According to \eqref{eq.bound0}, the right hand side is bounded by $-\kappa\lambda_0 + 2 K_1 \le -\lambda_0$,
as long as we choose $\kappa$ sufficiently large. This completes the proof of
\eqref{eq.asymptotic} in this case.

\subsection{Measures with small support}\label{ss.small}

Finally, we extend the claims in \eqref{eq.asymptotic}  to ergodic measures supported on
periodic orbits with period smaller than $L_1$. We need slightly more precise choices of
$\lambda_0$, $L_0$, and $\cU_0$, than in the previous section. These are made precise
along the way. Let $\Per(f,L_1)$ be the (compact) set of periodic points $p$ of $f$
such that $\tau(p,f)<L_1$.

\begin{Lem}\label{l.claim4}
There is a positive integer $m>0$, such that for any $p\in \Per(f,L_1)$
there exist $m_{\pm}(p) \in \{1, \dots, m\}$ satisfying
$$
\log\|Df^{m_+(p)\tau(p,f)} \mid E^1_p\|<0
\quand
\log\|Df^{-m_-(p)\tau(p,f)} \mid E^3_p\|<0.
$$
\end{Lem}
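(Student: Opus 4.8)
The plan is to exploit the uniform domination and the fact that there are only finitely many "shapes" of low-period orbits up to the relevant data. First I would observe that $\Per(f,L_1)$ is compact and, by Proposition~\ref{p.uniform on large periodic orbits}(1)--(2), over the orbit of each such $p$ the tangent bundle carries an $L_1$-dominated splitting $E^1\oplus E^2\oplus E^3$ with $\dim E^2\le 1$, where $E^1$ collects the eigendirections with exponents in $(-\infty,-\gamma_1)$. In particular every eigenvalue of $Df^{\tau(p,f)}(p)$ along $E^1_p$ has norm at most $e^{-\gamma_1\tau(p,f)}<1$, and similarly for $Df^{-\tau(p,f)}(p)$ along $E^3_p$. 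The point is that $\|Df^{k\tau(p,f)}\mid E^1_p\|^{1/k}\to \rho_1(p)<1$ as $k\to\infty$, where $\rho_1(p)$ is the spectral radius of $Df^{\tau(p,f)}(p)|E^1_p$; so for each individual $p$ there is some $m_+(p)$ with $\|Df^{m_+(p)\tau(p,f)}\mid E^1_p\|<1$, and symmetrically for $E^3$ with the inverse. The content of the lemma is the \emph{uniform} bound $m$, independent of $p$.

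To get uniformity I would use a compactness/continuity argument. The map $p\mapsto (E^1_p,E^2_p,E^3_p)$ is continuous on $\Per(f,L_1)$ (the dominated splitting varies continuously, and on a compact invariant set it extends continuously to the closure $\Perclos(f,L_1)$, which is still compact and still carries the $L_1$-dominated splitting by the usual limiting argument, e.g. the remark cited from \cite{Beyond}). The period $\tau(p,f)$ takes only finitely many values $\{1,\dots,L_1-1\}$ on this set, so we may decompose $\Perclos(f,L_1)$ into finitely many compact pieces according to the period. On each such piece, the quantity $\|Df^{k\tau}\mid E^1_x\|$ is a continuous function of $x$ for each fixed $k$, and it is submultiplicative along the orbit; by domination it tends to something strictly less than $1$. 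The cleanest route is: fix $\gamma_1$ as in Proposition~\ref{p.uniform on large periodic orbits}; since the splitting is $L_1$-dominated and the exponents along $E^1$ are $<-\gamma_1$, a standard argument (using that $\|Df^{k\tau}\mid E^1_x\|\le \|Df^{\tau}\mid E^1_x\|\cdots$ controlled via the dominated-splitting inequality together with the spectral gap at periodic points) shows there is a single $m$ such that $\|Df^{m\tau(x)}\mid E^1_x\|<1$ for \emph{all} $x$ in the compact set simultaneously — simply take $m$ large enough that $e^{-\gamma_1 m \tau}$ beats the uniform upper bound on the norm-distortion $\sup\{\|Df^j\|:|j|\le L_1\}$ raised to a bounded power. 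Do the symmetric thing for $E^3$ with $g^{-1}$, and take the maximum of the two values of $m$.

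The main obstacle is that $E^1_p$ being \emph{uniformly} contracted by \emph{some uniformly bounded} iterate is not immediate from having exponents below $-\gamma_1$: the norm $\|Df^{\tau}\mid E^1_p\|$ can exceed $1$ even though the spectral radius is $e^{-\gamma_1\tau}$, because of non-normality of $Df^{\tau}(p)|E^1_p$, and the amount of transient growth could a priori depend on $p$. This is exactly where I would invoke domination rather than just the periodic spectral gap: the $L_1$-domination of $E^1$ against $E^2\oplus E^3$ forces $\|Df^{L_1}\mid E^1_x\|$ to be controlled (no more than $\tfrac12$ times the corresponding quantity for $E^2\oplus E^3$, which in turn is bounded below away from $0$ uniformly by compactness), and iterating this over the orbit gives a \emph{uniform} exponential decay rate for $\|Df^{kL_1}\mid E^1_x\|$ once $k$ is a bounded multiple of the period. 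Concretely, one shows $\|Df^{jL_1}\mid E^1_x\|\le 2^{-j}\cdot(\text{uniform constant})$ along the orbit, so that $\|Df^{m\tau(x)}\mid E^1_x\|<1$ as soon as $m\tau(x)/L_1$ exceeds a fixed threshold depending only on $K_1$; since $\tau(x)\ge 1$, this threshold translates into a bound on $m$ that is independent of $x$. Symmetrically for $E^3$. Taking $m$ to be this common bound completes the proof.
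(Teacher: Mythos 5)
Your proof attempt takes a genuinely different route from the paper, and the route you settle on contains a gap that you yourself flagged but did not in fact close.

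The paper's argument is a sequential compactness / contradiction argument: supposing no uniform $m$ exists, one extracts a sequence $p_m\in\Per(f,L_1)$ with $\log\|Df^{n\tau(p_m,f)}\mid E^1_{p_m}\|\ge 0$ for all $n\le m$, passes to a subsequential limit $p$ (still periodic, with the splitting and its dimensions persisting in the limit), and obtains $\log\|Df^{n\tau}\mid E^1_p\|\ge 0$ for \emph{all} $n$; this contradicts the fact that the spectral radius of $Df^{\tau}\mid E^1_p$ is strictly less than $1$ (the exponents are $\le -\gamma_1<0$), which forces $\|Df^{n\tau}\mid E^1_p\|\to 0$.

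You correctly identify the genuine difficulty: the spectral radius of $Df^{\tau(p,f)}\mid E^1_p$ being $\le e^{-\gamma_1\tau}$ does not give a bound on the transient growth of $\|Df^{n\tau}\mid E^1_p\|$ without a further argument, because the condition number of the conjugating basis is a priori uncontrolled. But the resolution you then propose --- ``invoke domination rather than just the periodic spectral gap'' --- does not work. Domination is a \emph{relative} statement: it gives $\|Df^{kL_1}\mid E^1_x\|\le 2^{-k}\, m(Df^{kL_1}\mid E^{23}_x)$, where $m(\cdot)$ is the conorm. Your claim ``$\|Df^{jL_1}\mid E^1_x\|\le 2^{-j}\cdot(\text{uniform constant})$'' would require the conorm $m(Df^{jL_1}\mid E^{23}_x)$ to be bounded above uniformly in $j$. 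It is not: $E^{23}=E^c\oplus E^{cu}$ contains the expanding directions, so this conorm can grow exponentially. (Indeed a dominated splitting $E^1\oplus E^{23}$ in general does not imply that $E^1$ contracts at all --- both bundles can expand, merely at separated rates; contraction of $E^1$ here comes from the periodic exponent condition, not from domination.) Also, bounding the conorm ``below away from $0$'' is irrelevant to the desired estimate; you would need an upper bound, and none is available. So the chain of inequalities you sketch at the end collapses.

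Two remarks on salvageability. First, your initial compactness sketch (open-cover argument over $\Per(f,L_1)$, using continuity of $p\mapsto\|Df^{n\tau}\mid E^1_p\|$ and the fact that each $p$ has some good $n$) is essentially the finite-subcover version of the paper's sequential argument and can be made to work; the only technical wrinkle is that $\{p:\tau(p,f)=k\}$ is not closed, so one should decompose $\Per(f,L_1)$ by the closed sets $\{p: f^k(p)=p\}$, $k<L_1$, instead. Second, your ``cleanest route'' via the spectral gap alone can also be made rigorous --- a Schur-triangularization estimate shows that for $d\times d$ matrices $A$ with $\|A\|\le C$ and $\rho(A)\le\rho<1$ one has $\|A^n\|\le\sum_{k<d}\binom{n}{k}C^k\rho^{n-k}$, which is uniformly small for $n$ large depending only on $d$, $C$, $\rho$; here $C\le e^{K_1}$, $\rho\le e^{-\gamma_1}$, $d\le\dim M$ are all uniform --- but you did not supply this estimate, and you explicitly discard the spectral-gap route in favor of the domination route, which is the wrong one.
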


\begin{proof}
We explain how to find $m_+$ satisfying the first claim; the argument for the second claim is analogous.
Suppose that for every $m\ge 1$ there is $p_m\in\Per(f,L_1)$ such that
$\log\|Df^{n\tau(p_m,f)} \mid E^1_{p_m}\|\geq 0$ for all $1\leq n \leq m$.
Restricting to a subsequence if necessary, we may suppose that the
$L_1$-dominated splittings $T_{\Orb(p_m,f)}M = E_m^1 \oplus E_m^2 \oplus E_m^3$
are such that the dimensions of the subbundles $E^j_m$ are independent of $m$.
Analogously, we may suppose that the periods $\tau(p_m,f)$ are independent of $m$
and $(p_m)_n$ converges to some $p\in M$.
Then $p$ is periodic, with $\tau(p,f)=\tau(p_m,f)$, and there is an $L_1$-dominated
splitting $T_{\Orb(p,f)} M = E^1 \oplus E^2 \oplus E^3$ with $\dim E^j=\dim E_m^j$.
On the one hand, by continuity,
\begin{equation}\label{eq.frombelow}
\log\|Df^{nL_1} \mid E^1_{p}\|\geq 0
\quad\text{for any $n \ge 1$.}
\end{equation}
On the other hand, all the exponents of $Df^{\tau(p_m,f)} \mid E^1_{p_m}$
are bounded above by $-\gamma_1$ and so the same is true for the exponents
of $Df^{\tau(p,f)} \mid E^1_{p}$. It follows that
$$
\lim_{n\to\infty} \log\|Df^{n\tau(p,f)} \mid E^1_{p}\| = -\infty,
$$
which contradicts \eqref{eq.frombelow}. This contradiction proves the claim.
\end{proof}

Lemma~\ref{l.claim4} implies that if $L_0\ge 1$ is chosen to be a multiple
of $m!L_1!$ then
$$
\log\|Df^{L_0}\mid E^1_x\|<0 \quand \log\|Df^{-L_0} \mid E^3_x\|<0
$$
for every $x\in \Per(f,L_1)$. Define
$$
\lambda_*=-\max\{\log\|Df^{L_0}\mid E^1_x\|, \log\|Df^{-L_0} \mid E^3_x\|: p\in\Per(f,L_1)\}.
$$
Notice that $\lambda_*>0$, since $\Per(f,L_1)$ is compact. Moreover, by definition
\begin{equation}\label{eq.Lzero}
\log\|Df^{L_0} \mid E^1_x\|\le-\lambda_*
\quand
\log\|Df^{-L_0} \mid E^3_x \|\le-\lambda_*
\end{equation}
for all $x\in\Per(f,L_1)$. Clearly, the map
$g \mapsto \Per(g,L_1)$ is upper semi-continuous: for any neighborhood $U_0$ of
$\Per(f,L_1)$, we have $\Per(g,L_1) \subset U_0$ for every $g$ in a neighborhood of $f$.
Reducing $\cU_0$ if necessary, we may assume that this holds for every $g\in\cU_0$.
Choose $\lambda_0\in(0,\lambda_*)$. Taking some small $\delta>0$ and shrinking
$\cU_0$ and $U_0$ if necessary,
\begin{itemize}
\item[(a)] for any $g\in\cU_0$ and $x$, $y\in M$ with $d(x,y)<\delta$, we have
$$
\begin{aligned}
|\log\|Df^{L_0} \mid E^1_x\| & -\log\|Dg^{L_0} \mid E^1_y\|| < \lambda_*-\lambda_0 \quand \\
|\log\|Df^{-L_0} \mid E^3_x\| & -\log\|Dg^{-L_0} \mid E^3_y\|| <\lambda_*-\lambda_0.
\end{aligned}
$$
\item[(b)] for any $g\in \cU_0$ and $y\in U_0$, there exists $x\in \Per(f,L_1)$ such that
$$
d(f^{jL_0}(x),g^{jL_0}(y))<\delta \quad \text{for all $|j| \le L_1!$.}
$$
\end{itemize}
Fix $g\in \cU$ and $q\in \Per(g,L_1)\subset U_0$. By (b), there exists $p\in \Per(f,L_1)$
such that
$$
d(f^{jL_0}(p),g^{jL_0}(q))<\vep \quad\text{whenever $|j| \le L_1!$.}
$$
The periods $\tau(p,f)$ and $\tau(q,g)$ need not be the same. Combining
(a)-(b) with \eqref{eq.Lzero}, we get that
\begin{equation}\label{eq.forq}
\frac{1}{n}\sum_{i=1}^{n}\log\|Df^{L_0} \mid E^1_{f^{-iL_0}(q)}\|\le-\lambda_0.
\end{equation}
for any $1\leq n \leq L_1!$. Since $\tau(q,g) < L_1!$, it follows that \eqref{eq.forq}
holds for every $n\ge 1$. The proof of the claim about $\log\|Df^{L_0} \mid E^3\|$
is analogous. This finishes the proof of Proposition~\ref{p.dominatedsplitting}.

%\bibliographystyle{plain}
%\bibliography{bib}

\end{document}